\let\OLDthebibliography\thebibliography
\renewcommand\thebibliography[1]{
	\OLDthebibliography{#1}
	\setlength{\parskip}{0pt}
	\setlength{\itemsep}{0pt plus 0.3ex}
}
\newtheorem{Theorem}{Theorem}
\newtheorem{Lemma}{Lemma}
\newtheorem{Proposition}{Proposition}
\newcommand{\R}{\mathbb R}
\newcommand{\N}{\mathbb N}
\newcommand{\MS}{M\"obius loop}
\begin{document}

\date{}
\title{General penny graphs are at most 43/18-dense}
\author{
	Arsenii Sagdeev\thanks{Alfréd Rényi Institute of Mathematics, Budapest, Hungary. Email:~\href{mailto:sagdeevarsenii@gmail.com}{\tt sagdeevarsenii@gmail.com}.}
}

\maketitle

\begin{abstract}
	We prove that among $n$ points in the plane in general position, the shortest distance occurs at most $43n/18$ times, improving upon the upper bound of $17n/7$ obtained by G.~T\'oth in 1997.
\end{abstract}

\section{Introduction}

One of the first results that modern students learn in graph theory classes is that the maximum number of edges in a planar graph on $n\ge 3$ vertices is $3n-6$. Though the proof is completely elementary, even a small modifications of the problem can bring one to the edge of contemporary graph theory. For example, how does this upper bound change if we consider only \textit{matchstick graphs}, that are planar graphs such that all their edges are straight line segments of the same unit length\footnote{In this paper, we do not distinguish graphs from their drawings for simplicity.}? After a recent partial success~\cite{LS22}, Lavollée and Swanepoel~\cite{LS23} finally solved this problem by settling a forty-year-old conjecture of Harborth~\cite{Har86}.

\begin{Theorem}[\cite{LS23}] \label{tLS}
	The maximum number of edges in a matchstick graph on $n$ vertices is $\lfloor 3n-\sqrt{12n-3} \rfloor$.
\end{Theorem}

A \textit{penny graph} is a matchstick graph such that the distance between every two of their vertices as at least $1$. In other words, for a set of points in the plane, the edges of a penny graph are exactly the shortest distances between the points. For this special class of matchstick graphs, it is much easier to obtain the upper bound from \Cref{tLS} as it was notably shown by Harborth himself in 1974, see~\cite{Har74}. Observe that a hexagonal piece of a regular triangular lattice, which is the extremal configuration achieving this upper bound, contains many collinear triples. Brass wondered~\cite[Section~5.7, Problem~1]{BMP} how this upper bound would change if we considered only penny graphs with vertices in \textit{general position}, that is without collinear triples. In 1997, G.~T\'oth~\cite{Toth97} managed to prove the following.

\begin{Theorem}[G.~T\'oth] \label{tT}
	For each $n \in \N$, there exists a penny graph on $n$ vertices in general position that contains $37n/16-O(\sqrt{n}) = 2.3125n - O(\sqrt{n})$ edges. On the other hand, every  penny graph on $n$ vertices in general position has at most $17n/7 < 2.4286n$ edges.
\end{Theorem}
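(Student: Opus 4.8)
The statement packages together a construction and an upper bound, which I would attack by separate methods.

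\textbf{Upper bound.} I would begin with three structural facts, each proved by a short Euclidean argument. (i)~A penny graph is planar: if two edges $ab,cd$ of length $1$ (the minimum distance) crossed at an interior point, then $a,c,b,d$ would be in convex position with $ab,cd$ as diagonals, and since the sum of the diagonals of a convex quadrilateral exceeds the sum of one pair of opposite sides, $2=|ab|+|cd|>|ac|+|bd|\ge 1+1=2$, a contradiction. (ii)~In general position every vertex has degree $\le 5$: the neighbours of $v$ lie on the unit circle about $v$, pairwise at distance $\ge 1$, so consecutive ones subtend an angle $\ge 60^\circ$; six of them would be exactly $60^\circ$ apart and two would then be antipodal with respect to $v$, i.e.\ collinear with $v$. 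The same idea shows a degree-$5$ vertex lies in at most $3$ triangles, a triangle at $v$ corresponding to an angular gap of exactly $60^\circ$, and any four such gaps among the five again forcing an antipodal pair of neighbours. (iii)~Three pairwise-adjacent vertices form a unit equilateral triangle, every interior point of which is within $1/\sqrt3<1$ of a corner, so no vertex or edge lies inside it; hence the triangular faces are precisely the unit triangles of the graph.

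Euler's formula rewrites the target bound as $\sum_f\bigl(17-5\ell(f)\bigr)\le 34$, where $\ell(f)$ is the length of the face $f$; only triangular faces contribute positively, so it suffices to dominate the number of unit triangles by the number of larger faces. Plugging in only facts (i)--(iii) yields merely $\tfrac52 n$ edges, because one still cannot rule out an almost-$6$-regular picture; the real content of the theorem is that general position forbids dense clusters of degree-$5$ vertices. I would therefore run a discharging argument: assign charges to vertices and faces, then push charge out of the unit-triangle faces into the neighbouring larger faces and low-degree vertices, relying on a local lemma to the effect that around a degree-$5$ vertex lying in $3$ triangles the configuration is rigid enough (two of the triangles forming a rhombus, the remaining angular gaps squeezed into a narrow window, no collinear triple permitted) that its neighbourhood must contain enough non-triangular faces or vertices of degree $\le 4$ to absorb the surplus. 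The delicate part, which I expect to dominate the work, is the geometric case analysis of the admissible local pictures around degree-$4$ and degree-$5$ vertices, together with the calibration of the discharging weights so that the books balance at exactly the constant $17/7$.

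\textbf{Construction.} For the lower bound I would exhibit a doubly-periodic planar point set of edge density $37/16$ and cut from it a patch of $n$ points, which costs only $O(\sqrt n)$ edges along the boundary. The design is constrained in one essential way: a connected family of unit equilateral triangles meeting edge-to-edge pins all its vertices onto the triangular lattice and hence contains collinear triples, so the dense part of any good example must be assembled from small rigid blocks --- single unit triangles and rhombi formed by two unit triangles --- joined to one another only at shared vertices, possibly together with extra unit-length links between blocks. Because such vertex-hinged assemblies retain a positive-dimensional space of motions, one can perturb the whole configuration so as to destroy every collinear triple while keeping the prescribed unit distances exactly equal to $1$ and every other pairwise distance strictly above $1$; the perturbed set is then a penny graph in general position whose edge set is exactly the chosen one. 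What remains is to select the repeating arrangement of blocks and links so that its density is precisely $37/16$ --- I would look for one in which a typical vertex belongs to roughly two of the rhombi --- and to verify that this target density is simultaneously compatible with general position and with minimum distance exactly $1$; that compatibility check is the main point of this half.
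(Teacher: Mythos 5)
First, a point of reference: \Cref{tT} is quoted from T\'oth's paper \cite{Toth97} and is not proved anywhere in the present text, so there is no in-paper proof to match your attempt against; the closest material is \Cref{s2}, which proves the stronger bound $12/5$ by a purely vertex-based discharging argument. Measured on its own terms, your write-up is a plan rather than a proof, and both halves stop exactly where the mathematical content begins. For the upper bound, your facts (i)--(iii) (planarity, maximum degree $5$, no three consecutive triangular faces) are correct and correspond to \Cref{l_unit,l_angles,l_deg5}, but as you yourself note they only give $e\le 5n/2$. Everything below $5n/2$ hinges on the ``local lemma'' you invoke but never state or prove: some quantitative statement about what general position forces in the neighbourhood of clusters of degree-$5$ vertices. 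In the paper's (stronger) argument this role is played by \Cref{l_triangle} (two adjacent degree-$5$ vertices share a common neighbour, proved by an angle count using $\angle ABC+\angle BCD\ge\pi$), \Cref{l_5_in_core} (a degree-$5$ vertex with all neighbours of degree $5$ lies in a kernel), and \Cref{l_no_triangle,l_kernel_dis}, after which the discharging with $q=1/5$ is three lines. Without an explicit substitute for these lemmas and an explicit calibration showing the books balance at $17/7$, the upper bound is not established; ``the delicate part, which I expect to dominate the work'' is the whole theorem.

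The lower bound has the same defect: no configuration of density $37/16$ is exhibited. Saying you would ``look for'' a periodic arrangement of triangles and rhombi hinged at vertices is not a construction, and the two supporting claims you lean on are themselves unproved. That a vertex-hinged assembly has enough flexes to destroy \emph{every} collinear triple while keeping all prescribed distances exactly $1$ and all other distances strictly above $1$ is a genuine rigidity/genericity assertion that needs an argument (a flex that perturbs one collinear triple may be blocked by the constraint that some currently non-adjacent pair sits at distance exactly $1$ and must not drop below it). And the arithmetic identifying $37/16$ as the achievable density is precisely the outcome of having the construction in hand, not something one can postulate in advance. In short: the skeleton is reasonable and consistent with how such proofs go, but the two ingredients that constitute the theorem --- the explicit $37/16$-dense point set and the structural lemmas powering the $17/7$ discharging --- are both missing.
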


In this paper, we reduce the gap between these two bounds by improving the upper one. Our main tool here is the \textit{discharging method} that has numerous applications in graph theory \cite{Ack1,Ack2,AT,KP,RT} the most prominent of which is perhaps the proof of the four color theorem~\cite{AH}.

\begin{Theorem} \label{tmain}
	Every penny graph on $n$ vertices in general position has at most $43n/18 < 2.3889n$ edges.
\end{Theorem}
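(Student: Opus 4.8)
The plan is to work with a penny graph $G$ on $n$ vertices in general position, drawn with all edges of unit length. Since $G$ is planar, Euler's formula gives $|E| - |V| + 2 = |F|$, where $F$ is the set of faces (including the outer one). To bound $|E|$ by $\tfrac{43}{18}n$, it suffices to show that $G$ has enough faces of large size; concretely, if we assign to each face $f$ a charge equal to $|f| - 4$ (where $|f|$ is the number of edges on its boundary, counted with multiplicity), then $\sum_f (|f|-4) = 2|E| - 4|F| = 2|E| - 4(|E|-|V|+2) = 4|V| - 2|E| - 8$, so an upper bound on $|E|$ follows from a \emph{lower} bound on the total charge. I would instead redistribute: give each vertex an initial charge and each face an initial charge so that the total is a fixed linear function of $n$, then argue via discharging that every element ends with nonnegative charge (or conversely set it up so a deficit is impossible). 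The key structural facts I would use repeatedly: general position forbids $2$-faces (a triangle with a vertex on an edge) and forbids degenerate configurations; every internal face has at least $3$ sides; and — crucially — because the minimum distance is exactly $1$, around any vertex of degree $d$ the $d$ unit edges leave at consecutive angles each $\ge 60^\circ$, so $d \le 6$, and if $d=6$ all six angles are exactly $60^\circ$, which (by general position, since that forces three collinear neighbors pattern or a lattice-like rigidity) should be heavily constrained.

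**The discharging rules.**
The heart of the argument, and the main obstacle, is designing local discharging rules that squeeze past T\'oth's $17/7$. T\'oth's bound comes from charging triangular faces against their incident high-degree vertices; to improve it one must extract extra charge from configurations of several adjacent triangles around a degree-$6$ vertex. I would classify vertices by degree and faces by size, assign charge $\deg(v) - 6$ to each vertex (so only degree-$6$ vertices are "rich", with charge $0$, and lower-degree vertices are in deficit) — no, better: assign $6 - \deg(v)$ to vertices and $2|f| - 6$... Let me instead follow the cleanest route: use the identity $\sum_{v}(6-\deg(v)) + \sum_{f}(6-2|f|) = 12$ (a rewriting of Euler), note $6 - 2|f| \le 0$ for all faces with a $\le 0$ contribution from triangles equal to $0$ after a shift. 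The operative plan: set vertex charge $\mu(v) = 3 - \tfrac{1}{2}\deg(v)$ and face charge $\mu(f) = |f| - 6 + \text{(correction)}$ tuned so $\sum \mu = $ small constant, then the target $|E| \le \tfrac{43}{18}n$ translates into showing the "excess" charge $\sum_f (\text{triangle penalty})$ is at most $\tfrac{1}{9}n$ or so. Triangles send charge to incident vertices; a degree-$6$ vertex can absorb at most $6 \cdot \tfrac{1}{6} = 1$ unit, and general position ensures a degree-$6$ vertex is adjacent to at most a bounded number of \emph{other} degree-$6$ vertices with all-triangle links, because a fully triangulated neighborhood of two adjacent degree-$6$ vertices forces a lattice patch with collinear triples. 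This local rigidity — "you cannot have a large triangulated sublattice in general position" — is exactly what buys the improvement from $17/7$ to $43/18$, and making the case analysis of neighborhoods of degree-$5$ and degree-$6$ vertices tight (rather than lossy as in T\'oth) is where essentially all the work lies.

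**Executing the case analysis.**
After fixing the rules I would verify face-by-face and vertex-by-vertex that no element ends negative, the bulk being: (i) a triangular face starts with charge $-3$ (or whatever the tuned value is) and must recover it from its three vertices, each contributing according to a schedule depending on that vertex's degree and the number of triangles around it; (ii) a degree-$5$ or degree-$6$ vertex surrounded mostly by triangles is the tight case, and here I invoke the angle argument — six $60^\circ$ wedges at a degree-$6$ vertex, combined with the forbidden collinearity, means the far vertices of the surrounding triangles form a hexagon whose own incidences are constrained, so globally such vertices are rare (a $\sqrt{n}$-order or $cn$-with-small-$c$ phenomenon matching T\'oth's $37/16$ construction). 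I would also handle the outer face and small cases ($n$ tiny) separately. The main obstacle, to restate it plainly: the pure Euler/discharging skeleton only gives $3n$, T\'oth's insight about triangle-vertex charging gives $17/7$, and to reach $43/18$ one needs a genuinely new local lemma bounding how densely degree-$6$ triangulated vertices can cluster under general position — formulating that lemma with the exact constant that propagates to $43/18$, and checking it survives every boundary configuration, is the delicate part; everything downstream is bookkeeping.
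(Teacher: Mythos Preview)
Your proposal has a fundamental error that undermines the entire plan: you repeatedly organize the argument around degree-$6$ vertices (``only degree-$6$ vertices are `rich'\,'', ``a degree-$6$ vertex can absorb\ldots'', ``how densely degree-$6$ triangulated vertices can cluster''), but in a penny graph in general position there are \emph{no} degree-$6$ vertices. Three consecutive triangular faces at a vertex force two of its neighbors to be collinear with it, so the maximum degree is $5$ (this is the paper's Lemma~\ref{l_deg5}, and T\'oth already used it). Your entire discharging framework --- charge $6-\deg(v)$ on vertices, triangles recovering charge from degree-$6$ hubs, the ``local lemma bounding how densely degree-$6$ triangulated vertices can cluster'' --- is therefore aimed at configurations that do not exist. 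The relevant tight objects are clusters of degree-$5$ vertices, whose structure is quite different.

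Beyond this, the proposal never actually commits to discharging rules or proves any lemma; it oscillates between several charge conventions (``no, better:\ldots'', ``Let me instead follow the cleanest route'') without settling on one, and the decisive step is left as ``formulating that lemma with the exact constant that propagates to $43/18$\ldots is the delicate part''. That \emph{is} the whole proof. For comparison, the paper's argument places charge $5-\deg(A)$ on vertices only (no face charges), and the improvement from $12/5$ to $43/18$ comes from a specific structural theorem about second neighborhoods: every vertex of degree $4$ has a neighbor that is \emph{unpopular} (roughly, a degree-$5$ vertex with at least two low-degree neighbors, or itself of low degree) and does not lie in a \emph{kernel} (a rigid $4$-vertex configuration of degree-$5$ vertices). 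This lets a degree-$4$ vertex send only $q/2$ rather than $q$ to at least one neighbor, tightening the balance equation from $1-4q=q$ to $1-\tfrac{7}{2}q=q$, i.e.\ $q=\tfrac{2}{9}$, whence $5n-2e\ge \tfrac{2}{9}n$. Establishing that structural theorem requires a detailed case analysis (edge ``types'' around a degree-$4$ vertex, a forbidden $13$-point and a forbidden $19$-point subconfiguration, the latter verified with computer assistance); none of this is visible in, or recoverable from, your outline.
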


Let us also mention that various closely related problems were extensively studied during the last few decades, see the papers \cite{Be1,Brass1,Epp,GT,Gol,KKKRSU, Mor, PT2, PT,Swan2,Ves} and the books \cite{BMP, HT}.

\vspace{2mm}

\noindent
{\bf Proof outline.} In \Cref{s2}, we study the local structure that can arise between the vertices of a penny graph in general position and their neighbors. After describing some of these local constraints, we apply the discharging method to show that the number of edges cannot exceed $ 12n/5 = 2.4n$, which already improves the aforementioned result of G.~T\'oth. In \Cref{s3}, we extend our analysis to the second neighborhood of each vertex to find new local constraints, for one of which, namely for \Cref{l_no_clover}, we can give only a computer assisted proof. Combined with the discharging method, these new constraints would further improve the upper bound and complete the proof of \Cref{tmain}.

\section{Structure of the first neighborhood} \label{s2}

Most of the structural properties discussed in this section have already been noted in~\cite{Toth97}, though not all of them have been explicitly stated. Nevertheless, we present their (short) proofs here for completeness. We begin with some basic properties that trivially hold for all geometric graphs with edges of the same length.

\begin{Lemma} \label{l_unit} Every triangular face is an equilateral triangle;
		every quadrilateral face is a rhombus.
\end{Lemma}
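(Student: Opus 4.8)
The plan is to use only the fact that in a penny graph (indeed, in any matchstick graph) every edge is a straight segment of the same length, which we normalise to $1$; the conclusion will then be purely metric and will not use the general-position hypothesis at all.

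For a triangular face, I would simply observe that its boundary is a cycle on three vertices, so it is a triangle all three of whose sides are edges of the graph and hence have length $1$; such a triangle is equilateral. For a quadrilateral face, write its bounding $4$-cycle as $v_1 v_2 v_3 v_4$ in cyclic order, so that $|v_1 v_2| = |v_2 v_3| = |v_3 v_4| = |v_4 v_1| = 1$. In particular, both $v_2$ and $v_4$ lie at distance $1$ from $v_1$ and from $v_3$. Since the face is a genuine simple quadrilateral, the vertices $v_1, v_2, v_3$ are affinely independent, so $0 < |v_1 v_3| < |v_1 v_2| + |v_2 v_3| = 2$; hence the unit circles centred at $v_1$ and at $v_3$ meet in exactly two points, and these two points are symmetric with respect to the line $v_1 v_3$. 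Because $v_2 \neq v_4$, they must be precisely $v_2$ and $v_4$, so the diagonal $v_1 v_3$ is the perpendicular bisector of the segment $v_2 v_4$. Running the identical argument with the two diagonals interchanged shows that $v_2 v_4$ is in turn the perpendicular bisector of $v_1 v_3$. Writing $M$ for the common midpoint of the diagonals and $v_1 = M + a$, $v_3 = M - a$, $v_2 = M + b$, $v_4 = M - b$ with $a \perp b$, one sees that all four sides have length $|a \pm b|$, so $v_1 v_2 v_3 v_4$ is a parallelogram with equal sides, i.e.\ a rhombus.

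This is entirely routine, and I expect no real obstacle; the only point that needs a moment of care is that an \emph{equilateral} quadrilateral is automatically a (convex) rhombus rather than a non-convex ``dart'', and this is exactly what is ruled out by the observation above that $v_2$ and $v_4$ have to be the two distinct, oppositely placed intersection points of the unit circles about $v_1$ and $v_3$, and cannot coincide.
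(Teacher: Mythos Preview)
Your argument is correct. The paper does not actually give a proof of this lemma: it simply states it as one of the ``basic properties that trivially hold for all geometric graphs with edges of the same length'' and moves on. Your write-up supplies the routine details the paper omits; in particular, your circle-intersection argument neatly handles the only non-obvious point, namely that an equilateral $4$-cycle cannot be a non-convex dart. If anything, the final paragraph with $M$, $a$, $b$ is superfluous once you know the diagonals perpendicularly bisect each other, since that is already a standard characterisation of a rhombus.
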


Our next basic lemma describes two simple properties valid for all penny graphs. Note that throughout this paper, we measure the angle clockwise. In particular, for all vertices $A,B,C$, we have $\angle ABC+\angle CBA = 2\pi$.

\begin{Lemma} \label{l_angles}
	\,
	\begin{itemize}
		\item If $ABC$ is a path, then $\angle ABC \ge \pi/3$. Besides, if $\angle ABC = \pi/3$, then  $AC$ is an edge.
		\item If $ABCD$ is a path, then $\angle ABC + \angle BCD \ge\pi$. Besides, if $\angle ABC + \angle BCD =\pi$, then  $AD$ is an edge.
	\end{itemize}
\end{Lemma}
\begin{proof}
	The first property is immediate from the fact that $|AC| \ge |AB|=|BC|$. To see the second one, observe that if $\angle ABC + \angle BCD = \pi$, then $ABCD$ is a rhombus, and thus $|AD|=|BC|$. However, any rotation of $D$ around $C$ decreasing $\angle BCD$ results in $|AD|<|BC|$ which is not allowed, see \Cref{F1}. 
\end{proof}

\begin{figure}[!htb]
	\begin{minipage}{0.48\textwidth}
		\centering
		\vspace{-2mm}
		\includegraphics[scale=2.0]{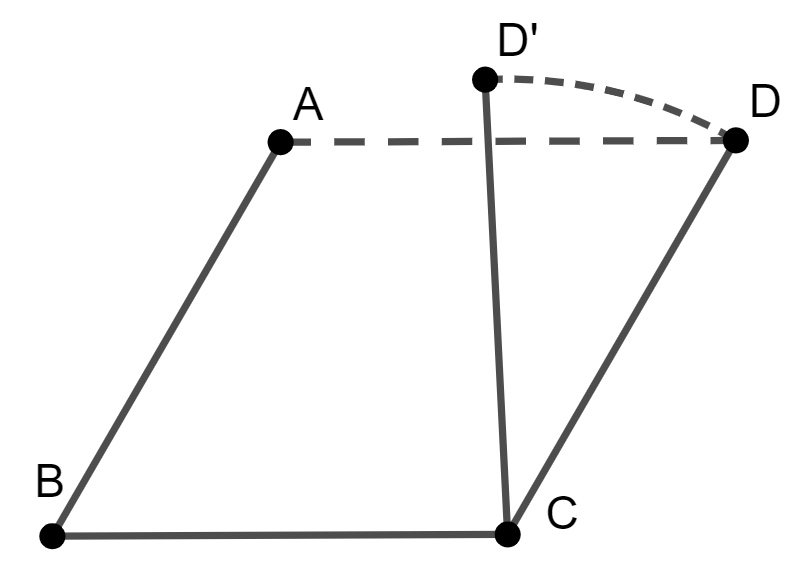}
		\caption{$|AD'| < |BC|$.}
		\label{F1}
	\end{minipage}\hfill
	\begin{minipage}{0.48\textwidth}
		\centering
		\includegraphics[scale=2.0]{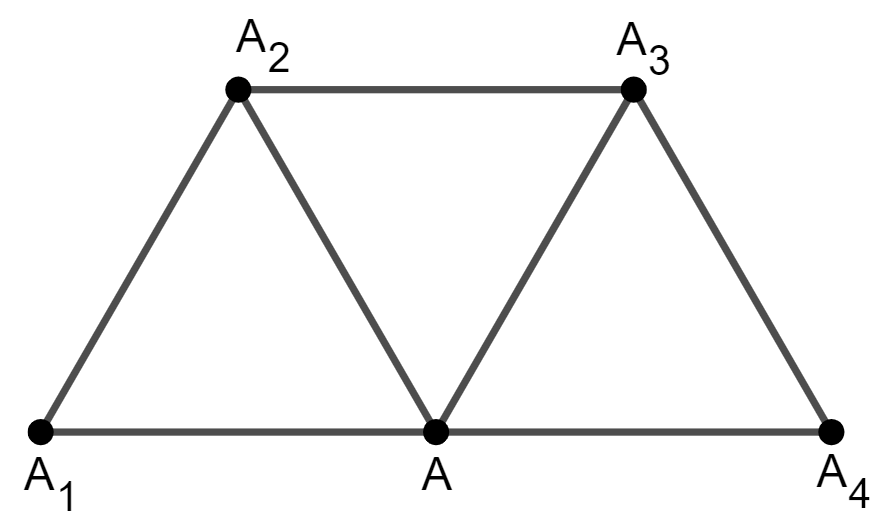}
		\vspace{-1.5mm}
		\caption{$A_1, A$, and $A_4$ are collinear.}
		\label{F2}
	\end{minipage}
\end{figure}

Let us proceed with the properties that are specific for penny graphs with vertices in general position.

\begin{Lemma} \label{l_deg5}
	Among three consecutive faces around each vertex, at least one is not a triangle. Therefore, the degree of each vertex is at most $5$.
\end{Lemma}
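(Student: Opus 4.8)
The plan is to obtain both assertions from a single elementary observation: three equilateral triangles sharing the vertex $A$ and glued to one another edge‑to‑edge force a straight angle at $A$, and hence a collinear triple. \Cref{l_unit} converts ``triangular face'' into ``equilateral triangle with a $\pi/3$ angle at $A$'', and \Cref{l_angles} gives a uniform lower bound of $\pi/3$ on every angle between two edges at a common vertex; these two inputs are all that is needed.

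For the first statement, list the neighbours of $A$ in cyclic (angular) order and suppose, for contradiction, that three consecutive incident faces $A_1AA_2$, $A_2AA_3$, $A_3AA_4$ are all triangular. By \Cref{l_unit} each of them is equilateral, so $\angle A_1AA_2=\angle A_2AA_3=\angle A_3AA_4=\pi/3$, whence $\angle A_1AA_4=\pi$; thus the rays $AA_1$ and $AA_4$ are opposite, i.e.\ $A_1,A,A_4$ are three distinct collinear points, contradicting general position (see \Cref{F2}). The only point needing a word of care is smallness of $\deg A$: if $\deg A\le 2$ there is nothing to check, and if $\deg A=3$ then three triangular faces at $A$ would give angle sum $3\cdot\pi/3=\pi\ne 2\pi$, so again this cannot happen, while for $\deg A\ge 4$ the neighbours $A_1,\dots,A_4$ are automatically distinct, so the collinear triple is genuine.

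For the degree bound, write $d=\deg A$. The $d$ edges at $A$ point in $d$ pairwise distinct directions — distinct because two neighbours lying on one ray from $A$ would be collinear with $A$ — so they partition the full angle around $A$ into $d$ sectors whose measures sum to $2\pi$. A sector between consecutive edges $AB$ and $AC$ has measure $\angle BAC$ or $2\pi-\angle BAC$, and since $B$–$A$–$C$ is a path, \Cref{l_angles} gives $\angle BAC\ge\pi/3$; in either case the sector measures at least $\pi/3$. Hence $d\cdot\tfrac{\pi}{3}\le 2\pi$, i.e.\ $d\le 6$. If $d=6$, every sector measures exactly $\pi/3$, so for three consecutive neighbours $A_1,A_2,A_3,A_4$ we again get $\angle A_1AA_4=\pi$ and a collinear triple, which is impossible; therefore $d\le 5$. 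I expect no genuinely hard step here — the whole proof is a short angle count — and the only mild subtlety is, as above, keeping track of the degenerate small‑degree cases and confirming that $\angle A_1AA_4=\pi$ really yields three \emph{distinct} collinear points.
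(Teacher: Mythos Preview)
Your proof is correct and follows the same idea as the paper: three consecutive equilateral triangles at $A$ force $\angle A_1AA_4=\pi$ and hence a collinear triple. The paper's own proof is a single sentence to this effect; you add welcome rigor by handling the small-degree cases explicitly and by spelling out the angle count for the degree bound (which the paper leaves implicit behind the word ``Therefore''), but there is no substantive difference in approach.
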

\begin{proof}
	Assume the contrary, namely that some vertex $A$ belongs to three consecutive equilateral triangles $AA_1A_2$, $AA_2A_3$, and $AA_3A_4$, see \Cref{F2}. Then the vertices $A_1AA_4$ are collinear, a contradiction. 
\end{proof}

\begin{figure}[!htb]
	\centering
	\vspace{-3mm}
	\includegraphics[scale=2.0]{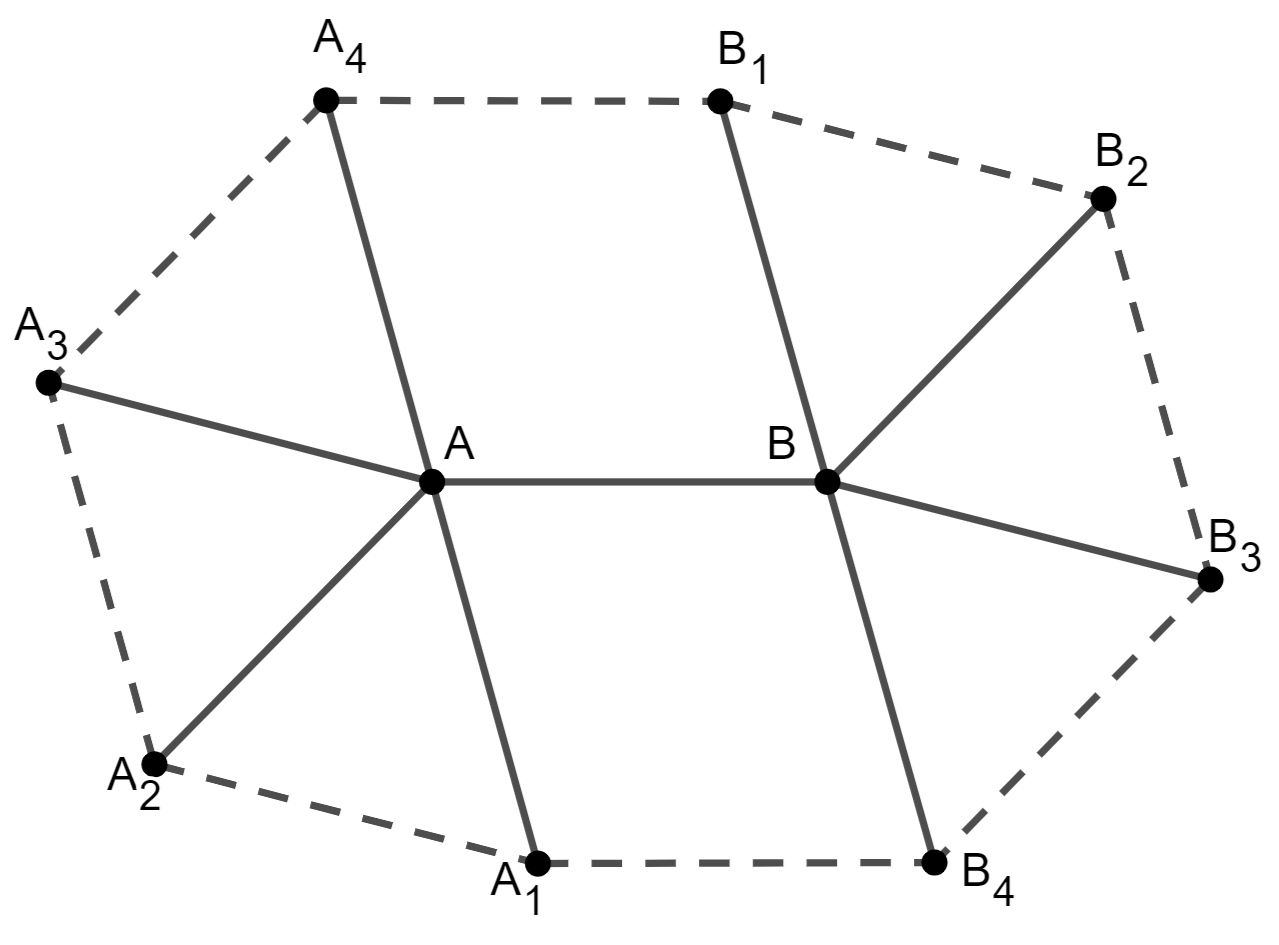}
	\vspace{-2mm}
	\caption{$A_1, A$, and $A_4$ are collinear.}
	\label{F3}
\end{figure}

\begin{Lemma} \label{l_triangle}
	If two adjacent vertices are both of degree $5$, then they have a common neighbor.
\end{Lemma}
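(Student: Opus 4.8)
The plan is to argue by contradiction. Suppose $A$ and $B$ are adjacent, both of degree $5$, and have no common neighbor; I will extract a collinear triple, contradicting general position. Write the five neighbors of $A$ in their cyclic order around $A$ as $B = C_0, C_1, C_2, C_3, C_4$, so that $C_1$ and $C_4$ are the two neighbors of $A$ consecutive with $B$; similarly write the neighbors of $B$ cyclically as $A = D_0, D_1, D_2, D_3, D_4$, with $D_1$ and $D_4$ consecutive with $A$. The no-common-neighbor hypothesis says exactly that $C_i \neq D_j$ for all $i,j \in \{1,2,3,4\}$. The five angles $\angle C_i A C_{i+1}$ (indices mod $5$) are each at least $\pi/3$ by the first part of \Cref{l_angles} and sum to $2\pi$, hence each is at most $2\pi - 4\cdot\frac{\pi}{3} = \frac{2\pi}{3} < \pi$; so none of them is reflex and each equals the interior angle at $A$ of the face it bounds, and likewise at $B$.

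Next I would examine the (at most) two faces incident to the edge $AB$. The boundary of each contains a subpath of the form $CABD$ with $C \in \{C_1, C_4\}$ and $D \in \{D_1, D_4\}$, and between the two faces both values of $C$ and both values of $D$ occur. Since $C \neq D$, each such subpath has four distinct vertices, so the second part of \Cref{l_angles} yields $\angle CAB + \angle ABD \ge \pi$. Adding the two resulting inequalities, the left side is $(\angle C_1 A B + \angle C_4 A B) + (\angle D_1 B A + \angle D_4 B A) \ge 2\pi$. On the other hand $\angle C_1 A B + \angle C_4 A B = 2\pi - \angle C_1 A C_2 - \angle C_2 A C_3 - \angle C_3 A C_4 \le 2\pi - 3\cdot\frac{\pi}{3} = \pi$, and symmetrically $\angle D_1 B A + \angle D_4 B A \le \pi$. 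Hence equality holds throughout, so in particular $\angle C_1 A C_2 + \angle C_2 A C_3 + \angle C_3 A C_4 = \pi$. But this sum is exactly the angle swept from the ray $AC_1$ to the ray $AC_4$ through the directions of $C_2$ and $C_3$, so those two rays are opposite and $C_1, A, C_4$ are collinear, contradicting general position.

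The step I expect to need the most care is the opening of the second paragraph: pinning down the faces incident to $AB$ and checking that their boundary subpaths through $AB$ use, between them, each of $C_1, C_4$ at $A$ and each of $D_1, D_4$ at $B$. This is immediate from the rotation system of the plane drawing, but one should also cover the case where $AB$ is a bridge, where the two faces coincide yet $AB$ still appears twice on the boundary of that single face and so still furnishes the two required subpaths. Everything else is elementary angle arithmetic, and general position enters only at the last line.
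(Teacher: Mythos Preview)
Your argument is correct and is essentially the paper's own proof: both assume no common neighbor, apply the two parts of \Cref{l_angles} to the ten angles around $A$ and $B$ (two ``outer'' pairs summing to at least $\pi$ each, six remaining angles at least $\pi/3$ each), and compare with the total $4\pi$ to force equality and hence a collinear triple. Your version is more explicit about the face structure and the bridge case, but the underlying angle-counting is identical.
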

\begin{proof}
	Assume the contrary, namely that two adjacent vertices $A$ and $B$ of degree $5$ do not have a common neighbor. We label their remaining neighbors as $A_1, A_2, A_3, A_4$ and $B_1, B_2, B_3, B_4$, respectively, in clockwise order, see \Cref{F3}. Count the sum of all  $10$ angles around $A$ and $B$ in two ways. On the one hand, it is clear that the result is $4\pi$. On the other hand, \Cref{l_angles} implies that $\angle A_4AB + \angle ABB_1 \ge \pi$, $\angle A_1AB + \angle ABB_4 \ge \pi$, while each of the $6$ remaining angles is at least $\pi/3$. These two quantities coincide if and only if all inequalities turn into equalities. However, $A_1, A$, and $A_4$ are collinear in this case, a contradiction.
\end{proof}

\begin{Lemma} \label{l_mobius}
	Let $XYZ$ be a triangle on vertices of degree $5$. Then either some two of them share a common neighbor different from the third vertex, or every two cyclically consecutive of their nine remaining neighbors are adjacent.
\end{Lemma}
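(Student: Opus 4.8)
The plan is to argue by contradiction: assume that no two of $X$, $Y$, $Z$ share a common neighbour other than the third vertex. Then each of $X$, $Y$, $Z$ has exactly three neighbours besides its two triangle-neighbours, and these nine points are pairwise distinct and distinct from $X$, $Y$, $Z$ (any coincidence would produce a common neighbour of two of them different from the third). Since $XY$, $YZ$, $ZX$ are edges they all have length $1$, so $XYZ$ is equilateral and the interior wedge at each of its vertices has angle $\pi/3$; no neighbour of that vertex lies in this wedge, because two angles each of size at least $\pi/3$ (guaranteed by the first part of \Cref{l_angles}) cannot both fit inside it. Hence the three non-triangle neighbours of each vertex fill its complementary $5\pi/3$ wedge. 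Orient the triangle as $X\to Y\to Z\to X$; for a vertex $V$ let $U$ and $W$ be its predecessor and successor, and label the non-triangle neighbours of $V$ as $V_1,V_2,V_3$ so that the cyclic order of the neighbours of $V$ is $U,V_1,V_2,V_3,W$. The first step is to check that, with this labelling, the nine neighbours appear in the cyclic order of the statement as $X_1,X_2,X_3,Y_1,Y_2,Y_3,Z_1,Z_2,Z_3$, and in particular that for each edge $VW$ the pair $(V_3,W_1)$ is cyclically consecutive; here the contradiction hypothesis is used to exclude a common neighbour of $V$ and $W$ lying between them on the outer side of $VW$.

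Next I would carry out an angle count. For each vertex $V$ set $\alpha_1^V=\angle UVV_1$, $\alpha_2^V=\angle V_1VV_2$, $\alpha_3^V=\angle V_2VV_3$, and $\alpha_4^V=\angle V_3VW$. By the first part of \Cref{l_angles} each of these twelve angles is at least $\pi/3$, and at each vertex they sum to $2\pi-\pi/3=5\pi/3$, so altogether they sum to $5\pi$. For each edge $VW$, the path $V_3VWW_1$ has four distinct vertices (by the distinctness above), so the second part of \Cref{l_angles} gives $\alpha_4^V+\alpha_1^W\ge\pi$; summing over the three edges yields $\sum_V(\alpha_1^V+\alpha_4^V)\ge 3\pi$. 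Therefore $\sum_V(\alpha_2^V+\alpha_3^V)=5\pi-\sum_V(\alpha_1^V+\alpha_4^V)\le 2\pi$, while each of these six ``middle'' angles is at least $\pi/3$. Hence every middle angle equals exactly $\pi/3$, and every inequality $\alpha_4^V+\alpha_1^W\ge\pi$ is an equality.

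It remains to turn these equalities into adjacencies. If $\alpha_2^V=\pi/3$, the triangle $VV_1V_2$ is isosceles with $|VV_1|=|VV_2|=1$ and apex angle $\pi/3$, hence equilateral, so $|V_1V_2|=1$ and $V_1V_2$ is an edge; the same argument applies to $V_2V_3$. This settles six of the nine consecutive pairs. If $\alpha_4^V+\alpha_1^W=\pi$, then placing $V$ and $W$ on a line with $V_3$ and $W_1$ on the same side forces $\vec{VV_3}=\vec{WW_1}$, hence $\vec{V_3W_1}=\vec{VW}$ and $|V_3W_1|=1$, so $V_3W_1$ is an edge; this settles the remaining three pairs. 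Thus all nine cyclically consecutive pairs are adjacent, which is the second alternative. I expect the main obstacle to be the very first step — pinning down the cyclic order of the nine neighbours and verifying that the across-edge pairs are consecutive — because once the labelling is fixed, the rest is a two-line double count followed by two elementary distance computations.
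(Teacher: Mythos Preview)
Your proof is correct and follows essentially the same approach as the paper: an angle-sum double count around the three triangle vertices forces all the inequalities from \Cref{l_angles} to be equalities, which then yields the nine adjacencies. You are more explicit than the paper about the cyclic labelling of the nine neighbours and about converting the equalities into unit distances (the paper counts the fifteen angles, including the three interior $\pi/3$ angles, summing to $6\pi$ rather than your twelve summing to $5\pi$, but this is the same computation), so the core argument is identical.
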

\begin{proof}
	Suppose that no two vertices of the triangle $XYZ$ share a common neighbor different from the third vertex. Let $X_1, X_2, X_3, Y_1, Y_2, Y_3, Z_1, Z_2, Z_3$ be all their remaining neighbors, respectively, labeled in clockwise order, see \Cref{F6}. We count the sum of all  $15$ angles around $X,Y$, and $Z$ in two ways. On the one hand, the result is clearly equal to $6\pi$. On the other hand, \Cref{l_angles} implies that each of the $3$ sums $\angle X_3XY + \angle XYY_1, \angle Y_3YZ + \angle YZZ_1$, and $\angle Z_3ZX + \angle ZXX_1$ is at least $\pi$, while each of the $9$ remaining terms is at least $\pi/3$. These two quantities coincide if and only if all inequalities turn into equalities. Therefore, consecutive vertices on the outer cycle of length $9$ are indeed adjacent.
\end{proof}

\vspace{2mm}

\begin{figure}[!htb]
	\begin{minipage}{0.48\textwidth}
		\centering
		\vspace{6mm}
		\includegraphics[scale=1.5]{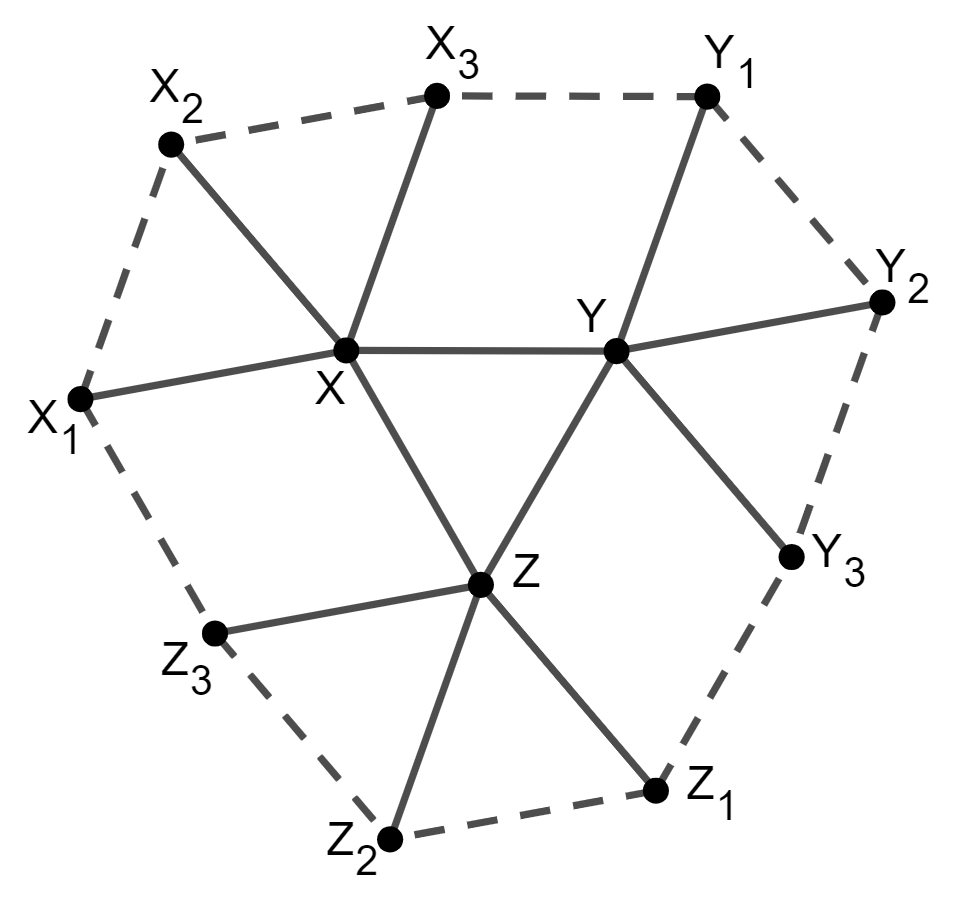}
		\caption{Around a triangle on vertices of degree $5$ without common neighbors, there is a \MS.}
		\label{F6}
	\end{minipage}\hfill
	\begin{minipage}{0.48\textwidth}
		\centering
		\vspace{-11mm}
		\includegraphics[scale=1.5]{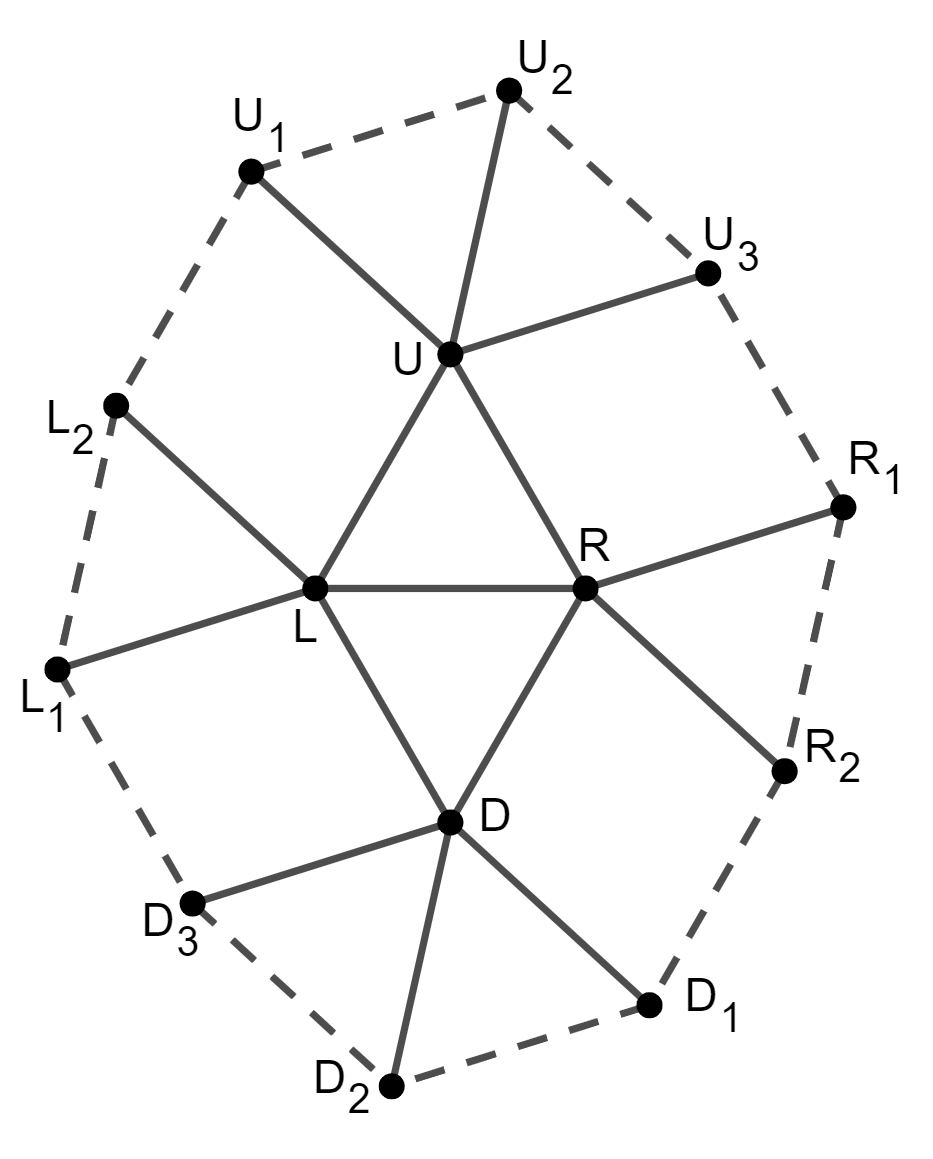}
		\vspace{-1mm}
		\caption{Around each kernel, there is an apricot.}
		\label{F4}
	\end{minipage}
\end{figure}

\vspace{-1mm}
In what follows, we call a configuration in the right side of \Cref{F6} a \textit{\MS}. Our next lemma is very similar to the previous one, since it also ensures some rigid structure around a group of four vertices, each of degree $5$, forming a $4$-cycle with one diagonal.

\begin{Lemma} \label{l_apricot}
	Let $L, U, R, D$ be four vertices of degree $5$ such that five pairs $LU, UR, RD, DL, LR$ are edges. Then every two cyclically consecutive of their ten remaining neighbors are adjacent.
\end{Lemma}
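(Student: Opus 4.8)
\medskip

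\noindent\textbf{Setup.} I would argue by the same angle-counting device used in \Cref{l_triangle} and \Cref{l_mobius}. The four vertices $L,U,R,D$ have total degree $20$; the edges among them account for $5$ incidences from each side, i.e.\ the pairs $LU,UR,RD,DL$ each contribute to two of the four vertices and $LR$ contributes to $L$ and $R$. So the number of ``outer'' neighbors, counted with multiplicity, is $20 - 2\cdot 5 = 10$, matching the claimed ten remaining neighbors; first one should check these ten are genuinely distinct, which follows from general position together with \Cref{l_triangle} applied to the various degree-$5$ adjacent pairs (any shared outer neighbor would force a forbidden collinear triple, exactly as in the earlier lemmas). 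Label them cyclically around the boundary of the union of the triangles $LUR$ and $LRD$ (equivalently around the quadrilateral $U,R,D,L$ with diagonal $LR$), say $U_1,U_2,\dots$ going clockwise starting from $U$.

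\medskip

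\noindent\textbf{The angle count.} Sum all $20$ face-angles at $L,U,R,D$; the total is $8\pi$. Now I split this sum. The four ``reflex-type'' constraints come from the four length-three sub-paths that turn the corner at consecutive outer edges of the big quadrilateral: at $U$ the two outer angles flank the edges $UL$ and $UR$, and the path through $U$ joining an outer neighbor of $U$ to $L$ (respectively to $R$) together with the continuation along $LD$ (respectively $RD$) gives, via \Cref{l_angles}, a pair of consecutive angles summing to at least $\pi$. Doing this at all four ``convex'' corners $U,R,D,L$ of the outer quadrilateral yields four sums each $\ge\pi$, i.e.\ $\ge 4\pi$ total, and consumes $8$ of the $20$ angles. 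The remaining $12$ angles are each $\ge \pi/3$ by the first part of \Cref{l_angles}, contributing $\ge 4\pi$. Hence the total is $\ge 8\pi$, but it equals $8\pi$, so \emph{every} inequality is an equality: all twelve non-flanking angles equal $\pi/3$, and all four flanking pairs sum to exactly $\pi$, i.e.\ form rhombi.

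\medskip

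\noindent\textbf{From equalities to adjacencies.} Equality $\angle = \pi/3$ in every triangular gap means each of the twelve ``gaps'' between consecutive neighbors (around each of $L,U,R,D$) is an equilateral triangle by \Cref{l_unit}, so consecutive outer neighbors sharing a common vertex among $\{L,U,R,D\}$ are adjacent. The flanking rhombus conditions at each of $U,R,D,L$ handle exactly the four transitions where the cyclic successor on the outer ten-cycle switches from being a neighbor of one of $L,U,R,D$ to being a neighbor of the next: the rhombus structure forces those two boundary neighbors to be at distance $1$ as well. (Here one must be slightly careful at the corners $L$ and $R$, which have degree $5$ but sit on three of the small triangles $LUR$, $LRD$ plus their own outer fans; the bookkeeping is that $L$ contributes two outer neighbors and $R$ contributes two, while $U$ and $D$ contribute three each, totalling ten, and the four rhombi are precisely the four corners of the outer quadrilateral $U\!-\!R\!-\!D\!-\!L$.) Stitching the twelve equilateral triangles and four rhombi together shows every two cyclically consecutive outer neighbors are adjacent, as claimed.

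\medskip

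\noindent\textbf{Main obstacle.} The routine part is the angle arithmetic; the delicate part is the \emph{combinatorial bookkeeping} — verifying that the ten outer neighbors are distinct and correctly identifying which four pairs of consecutive outer neighbors are governed by the rhombus (length-four path) constraint versus the equilateral-triangle (length-three path) constraint, since $L$ and $R$ play an asymmetric role compared with $U$ and $D$. I expect one needs to rule out, using general position and \Cref{l_triangle}/\Cref{l_deg5}, the degenerate possibility that the two triangles $LUR$ and $LRD$ fold onto the same side of $LR$ or that some outer neighbor coincides with one of $L,U,R,D$; once the picture is pinned down to the ``convex'' configuration of \Cref{F4}, the equality analysis goes through verbatim as above.
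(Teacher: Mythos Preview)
Your approach is essentially the same as the paper's: sum the twenty face-angles at $L,U,R,D$ to get $8\pi$, bound four pairs by $\ge\pi$ via the second part of \Cref{l_angles} and the remaining twelve by $\ge\pi/3$, and read off all adjacencies from the equality case. Two small corrections are worth noting. First, the distinctness of the ten outer neighbors is not a consequence of \Cref{l_triangle} (which asserts that adjacent degree-$5$ vertices \emph{do} share a neighbor); the right tool is \Cref{l_deg5}: an extra common neighbor between, say, $U$ and $R$ would create three consecutive triangular faces around $R$ (namely $UR\!R_1$, $LUR$, $LRD$). Second, your description of the four $\ge\pi$ paths is garbled: the continuation should go to an outer neighbor of the adjacent kernel vertex, not ``along $LD$''. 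Concretely, the four paths are $L_2LU U_1$, $U_3URR_1$, $R_2RDD_1$, $D_3DLL_1$, each contributing one angle at each of two kernel vertices; your later paragraph on the rhombi makes clear you have this picture in mind, but the earlier sentence does not match it.
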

\begin{proof}
	First, observe that the vertices $L, U, R, D$ do not have other common neighbors, since otherwise there would be three consecutive triangular faces around one of these vertices, which contradicts \Cref{l_deg5}. Let $L_1, L_2, U_1, U_2, U_3, R_1, R_2, D_1, D_2, D_3$ be all their remaining neighbors, respectively, labeled in clockwise order, see \Cref{F4}. We count the sum of all  $20$ angles around the four given vertices in two ways. On the one hand, the result is clearly equal to $8\pi$. On the other hand, \Cref{l_angles} implies that each of the $4$ sums $\angle L_2LU + \angle LUU_1, \angle U_3UR + \angle URR_1, \angle R_2RD + \angle RDD_1, \angle D_3DL + \angle DLL_1$ is at least $\pi$, while each of the $12$ remaining terms is at least $\pi/3$. These two quantities coincide if and only if all inequalities turn into qualities. Therefore, consecutive vertices on the outer cycle of length $10$ are indeed adjacent.
\end{proof}

In what follows, we call a quadruple $L, U, R, D$ satisfying the condition of \Cref{l_apricot} a \textit{kernel}, and the cycle of length $10$ around them an \textit{apricot}.

\begin{Lemma} \label{l_no_triangle}
	For each apricot labeled as in the proof of \Cref{l_apricot}, the vertices $L_2$ and $U_1$ do not share a common neighbor. In particular, at least one of them is of degree less than $5$. The same holds for three other symmetric pairs $U_3$ and $R_1$, $R_2$ and $D_1$, $D_3$ and $L_1$ as well.
\end{Lemma}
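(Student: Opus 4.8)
The plan is to pin down the rigid geometry of the corner at $L,U$ and then rule out a common neighbour of $L_2$ and $U_1$ by locating it.

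First I would extract what \Cref{l_apricot} already hands us about the corner at $L,U$. In the angle count proving that lemma, the pair-sum $\angle L_2LU+\angle LUU_1$ is forced to be exactly $\pi$, so by the equality case of \Cref{l_angles} the quadrilateral $L L_2 U_1 U$ is a rhombus with $LU\parallel L_2U_1$ and all sides of length $1$. It is in fact a face: a unit rhombus contains no point lying at distance $\ge 1$ from all four of its corners, so no vertex of the penny graph lies inside it. Let $\alpha=\angle U_1L_2L$ be its angle at $L_2$; its diagonals $LU_1$ and $L_2U$ have lengths $2\sin(\alpha/2)$ and $2\cos(\alpha/2)$. Both are $\ge 1$ because all distances in a penny graph are $\ge 1$, and neither can equal $1$: if $|LU_1|=1$ then $U_1$ would be a sixth neighbour of $L$, and if $|L_2U|=1$ then $L_2$ would be a sixth neighbour of $U$, each contradicting degree $5$. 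Hence $\alpha\in(\pi/3,2\pi/3)$.

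Now assume some vertex $W$ is adjacent to both $L_2$ and $U_1$. Then $|L_2U_1|=|L_2W|=|WU_1|=1$, so $L_2WU_1$ is equilateral and $W$ is one of the two points over the midpoint of $L_2U_1$ at distance $\sqrt3/2$ from that line. Put $L_2=(-\tfrac12,0)$, $U_1=(\tfrac12,0)$ with $L,U$ in the upper half-plane, so $L=(-\tfrac12+\cos\alpha,\sin\alpha)$ and $U=(\tfrac12+\cos\alpha,\sin\alpha)$. If $W$ lies in the upper half-plane then $W=(0,\tfrac{\sqrt3}{2})$; since $\alpha\in(\pi/3,2\pi/3)$ we have $\sin\alpha>\tfrac{\sqrt3}{2}$ and $\sqrt3\,|\cos\alpha|<\sin\alpha$, and a direct check (comparing $W$ with the cross-section of the rhombus at height $\tfrac{\sqrt3}{2}$) shows $W$ lies strictly inside the face $L L_2 U_1 U$ --- impossible. (At the forbidden endpoints $\alpha=\pi/3$, resp.\ $\alpha=2\pi/3$, one would instead get $W=L$, resp.\ $W=U$, but neither is adjacent to both $L_2$ and $U_1$.)

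The remaining case, $W=(0,-\tfrac{\sqrt3}{2})$ on the side of $L_2U_1$ away from the kernel, is the delicate one and, I expect, the main obstacle. The apricot is rigid --- every angle around $L,U,R,D$ is a function of $\alpha$ alone --- so the positions of the flanking vertices $L_1$, $U_2$ and, through the $D$--$L$ corner, $D_3$ are all determined; the plan is to use them to trap $W$: show that $W$ must coincide with one of these vertices, lie in the interior of one of the equilateral faces $LL_1L_2$ or $UU_1U_2$, or force three vertices onto a line, for an appropriate part of the range of $\alpha$, while for the remaining $\alpha$ a distance comparison against the penny bound (or an application of \Cref{l_angles} along the path $U_2,U_1,W,L_2,L_1$) closes it. Getting this last step to cover every $\alpha\in(\pi/3,2\pi/3)$ cleanly is where the work lies.

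With both positions of $W$ excluded, $L_2$ and $U_1$ have no common neighbour; the ``in particular'' then follows from \Cref{l_triangle}, since two adjacent degree-$5$ vertices must share a common neighbour. The remaining three pairs $(U_3,R_1)$, $(R_2,D_1)$, $(D_3,L_1)$ sit at the other corners of the kernel in configurations combinatorially identical to that of $(L_2,U_1)$ --- a corner rhombus flanked by two equilateral triangles --- so the same argument applies verbatim.
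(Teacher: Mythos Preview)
Your setup is sound --- the rhombus $LUU_1L_2$, the strict range $\alpha\in(\pi/3,2\pi/3)$, and the rigidity of the whole apricot as a one-parameter family are all correct and cleanly argued --- but the proof is incomplete exactly where you say it is. The case $W=(0,-\tfrac{\sqrt3}{2})$ on the outer side of $L_2U_1$ is the only case that actually occurs, and for it you only offer a plan (``trap $W$ with $L_1,U_2,D_3$, or use \Cref{l_angles} along $U_2U_1WL_2L_1$''). That plan does not close: for instance, along the path $U_2U_1W$ one computes $\angle U_2U_1W = 4\pi/3-\alpha\in(2\pi/3,\pi)$, so the inequality of \Cref{l_angles} is comfortably satisfied and yields no contradiction; and for generic $\alpha$ the point $W$ lies strictly outside both equilateral faces $LL_1L_2$ and $UU_1U_2$, so no ``interior'' argument is available either. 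A case split over $\alpha$ might eventually be made to work, but you have not carried it out.

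The paper's proof avoids all of this with a single observation you did not try: look \emph{across} the kernel rather than beside it. From the equalities forced in \Cref{l_apricot} one gets $\angle AL_2L=\angle L_2LD$ (both equal $4\pi/3-\beta$ with $\beta=\angle L_2LU$), so $AL_2\parallel LD$; and $\angle L_2LD=\angle LDD_1$, so $L_2L\parallel DD_1$. Since all four segments have unit length, the translations match up and $A$, $L$, $D_1$ are collinear, contradicting general position. The point is that the rigidity you noticed propagates all the way to $D_1$; using $D_1$ instead of the nearby vertices $L_1,U_2,D_3$ turns a messy case analysis into three lines. Your ``in particular'' via \Cref{l_triangle} and the symmetry remark for the other three pairs are fine.
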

\begin{proof}
	Without loss of generality, it is sufficient to prove the statement only for one out of four symmetric pairs. Assume the contrary, namely that the vertices $L_2$ and $U_1$ have a common neighbor $A$. Observe that $\angle AL_2L = \angle DLL_2$, see \Cref{F5}. Hence, the segment $AL_2$ is parallel to $LD$. Similarly, $L_2L$ is parallel to $DD_1$ because $\angle DLL_2 = \angle LDD_1$. Since these four segments are also of the same length, we conclude that the vertices $A$, $L$, and $D_1$ are collinear, a contradiction. Now \Cref{l_triangle} implies that either $L_2$ or $U_1$ is of degree less than $5$.
\end{proof}

\begin{figure}[!htb]
	\centering
	\includegraphics[scale=1.50]{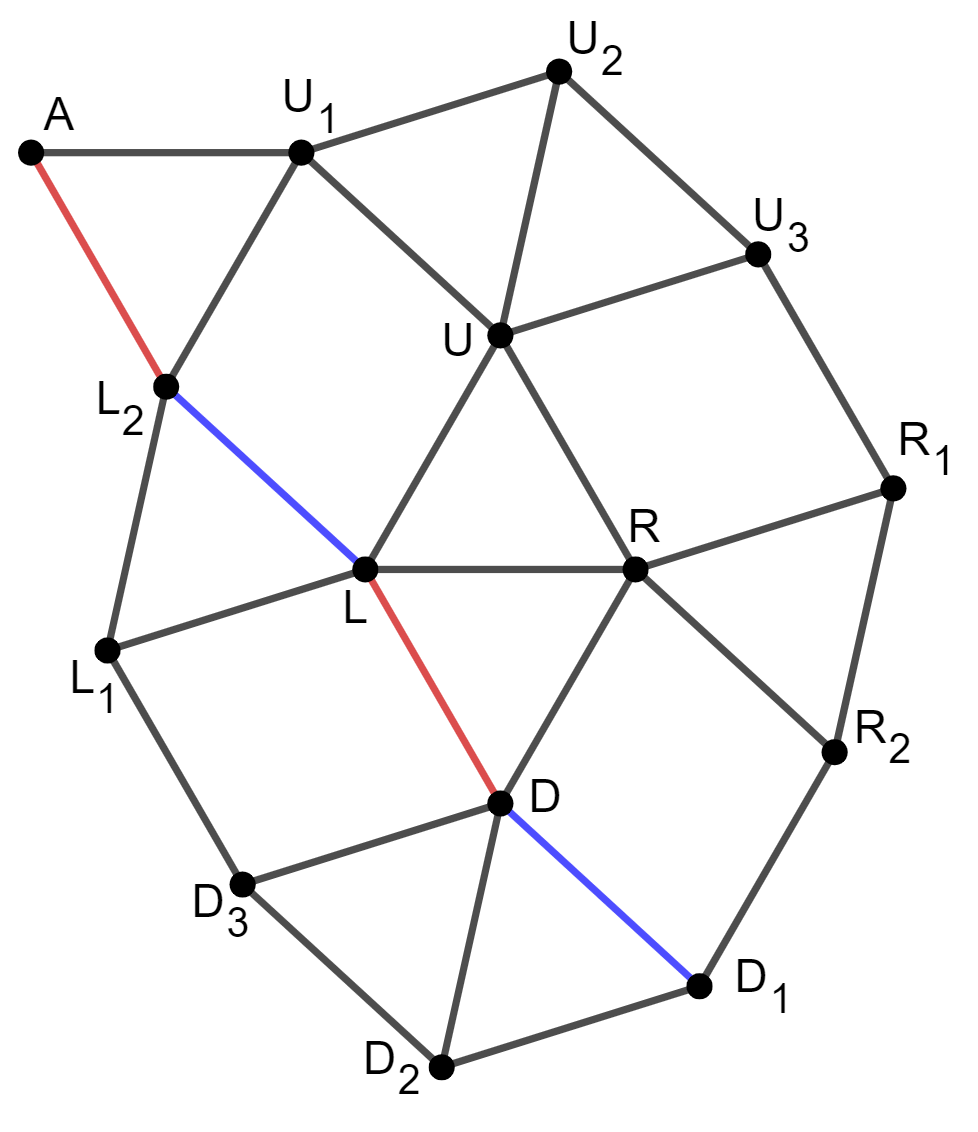}
	\caption{$A, L$, and $D_1$ are collinear.}
	\label{F5}
\end{figure}

\begin{Lemma} \label{l_kernel_dis}
	For each apricot labeled as in \Cref{l_apricot}, the vertices $L_1, L_2, U_1, U_3, R_1, R_2, D_1, D_3$ do not belong to a kernel of another apricot\footnote{Note that $U_2$ and $D_2$ can belong to a kernel. For instance, $U_2$ may play the role of $D$ in another apricot.}. In particular, kernels of distinct apricots are disjoint.
\end{Lemma}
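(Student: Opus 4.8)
The plan is to reduce everything to \Cref{l_no_triangle} via the four \emph{bridge quadrilaterals} of an apricot. First I would assemble a few elementary facts. The $4$-cycles $L_2LUU_1$, $U_3URR_1$, $R_2RDD_1$, $D_3DLL_1$ really are $4$-cycles of the penny graph (their edges are kernel edges and edges between cyclically consecutive vertices of the $10$-cycle, by \Cref{l_apricot}); none of them has a chord, since a chord would be an edge like $L_2U$ or $LU_1$, which would make $L_2$ (respectively $U_1$, \dots) a remaining neighbour of two different kernel vertices, contrary to the labelling in \Cref{l_apricot}; each of the eight vertices in the statement lies on exactly one of these $4$-cycles; and the two non-kernel vertices of each such $4$-cycle form exactly one of the four pairs of \Cref{l_no_triangle}. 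Next, any unit $4$-cycle of a penny graph in general position bounds a face: four points with unit consecutive distances are in convex position (each of two opposite ones is one of the two intersection points of a fixed pair of unit circles, and those two points lie on opposite sides of the line through the other two) and, by general position, have no three collinear, so they span a genuine convex rhombus; its diagonals are at least the minimum distance $1$ and satisfy $d_1^2+d_2^2=4$, hence are at most $\sqrt3$, so every interior point of the rhombus is within distance $<1$ of a vertex and no vertex can lie inside; since a penny graph in general position is a plane graph, no part of the graph lies inside. The same applies to the unit equilateral triangles of an apricot. Combining this with the fact that a kernel vertex has degree exactly $5$ (\Cref{l_deg5}), for any kernel vertex $w$ of any apricot $\mathcal B$ the five faces of the penny graph at $w$ are exactly the five faces prescribed by \Cref{l_apricot}: three triangles and the two bridge quadrilaterals through $w$, each bridge quadrilateral being a $4$-cycle of the cyclic shape $(\text{outer})\,(\text{kernel})\,(\text{kernel})\,(\text{outer})$ in which $w$ is one of the two kernel vertices.

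Now I would suppose one of the eight vertices, say $v$, is a kernel vertex of an apricot $\mathcal A'$ different from the given apricot $\mathcal A$. Let $\rho$ be the bridge quadrilateral of $\mathcal A$ through $v$, and let $p$ be the other vertex of $\rho$ that is not a kernel vertex of $\mathcal A$; then $(v,p)$ is one of the pairs of \Cref{l_no_triangle}, and on the $4$-cycle $\rho$ the two neighbours of $v$ are $p$ and some kernel vertex $x$ of $\mathcal A$. Since $\rho$ is a quadrilateral face of the penny graph at $v$, the previous paragraph (applied to $\mathcal B=\mathcal A'$) shows it is one of the two bridge quadrilaterals of $\mathcal A'$ at $v$; hence on the $4$-cycle $\rho$ exactly two consecutive vertices are kernel vertices of $\mathcal A'$, one of them being $v$, so the other is one of the two neighbours of $v$ on $\rho$, i.e. it is $p$ or $x$. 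If it is $p$, then $v$ and $p$ are two adjacent kernel vertices of $\mathcal A'$; in the kernel graph ($K_4$ minus an edge) every edge lies in a triangle, so $v$ and $p$ have a common neighbour — contradicting \Cref{l_no_triangle}. If it is $x$, then $p$ is an \emph{outer} vertex of $\mathcal A'$ adjacent to the kernel vertex $v$ of $\mathcal A'$; since an outer vertex of an apricot is adjacent to exactly one of its kernel vertices, $p$ is a remaining neighbour of $v$ in $\mathcal A'$, hence lies in the block of two or three cyclically consecutive vertices of the $10$-cycle of $\mathcal A'$ formed by the remaining neighbours of $v$; as this block has size $\ge2$, $p$ has a neighbour $q$ in it along the $10$-cycle, and $q$ is adjacent both to $v$ (a remaining neighbour) and to $p$ (a cyclically consecutive vertex), again contradicting \Cref{l_no_triangle}. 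This settles the main statement. For the ``in particular'', if the kernels of distinct apricots $\mathcal A,\mathcal A'$ shared a vertex $w$, then $w$ would be a kernel vertex of both, and applying the face analysis to each of the two bridge quadrilaterals of $\mathcal A$ through $w$ forces the kernel vertices of $\mathcal A'$ adjacent to $w$ on those quadrilaterals to be outer vertices of $\mathcal A$ — impossible by the main statement — hence kernel vertices of $\mathcal A$; running this around the kernel of $\mathcal A$ shows all four of its vertices lie in the kernel of $\mathcal A'$, so the kernels coincide and $\mathcal A=\mathcal A'$, a contradiction.

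I expect the real work to sit in the first paragraph: converting the purely combinatorial conclusion of \Cref{l_apricot} into a statement about the faces of the \emph{ambient} penny graph at a kernel vertex. One must check that the triangles and quadrilaterals prescribed by \Cref{l_apricot} are honestly faces of the penny graph (this is where ``a unit $4$-cycle, or a unit equilateral triangle, bounds a face'' is used, together with the plane-graph property) and that these are the only faces at a kernel vertex (this uses the degree bound \Cref{l_deg5}). A secondary, purely bookkeeping, point is the orientation in the second paragraph: one needs that a bridge quadrilateral is an $(\text{outer})\,(\text{kernel})\,(\text{kernel})\,(\text{outer})$ $4$-cycle, so that the other kernel vertex of $\mathcal A'$ on $\rho$ is genuinely a \emph{neighbour} of $v$ on $\rho$ rather than merely opposite to it — which is exactly what feeds the vertex $p$ (or its cyclic neighbour $q$) into \Cref{l_no_triangle}.
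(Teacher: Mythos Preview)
Your argument is correct, but it is considerably more elaborate than the paper's. Both proofs rest on the same two ingredients—the face structure at a kernel vertex and \Cref{l_no_triangle}—but the paper packages them via a single local invariant rather than a case analysis.

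The paper's proof is two lines. It observes that every kernel vertex $w$ has the property that among any two \emph{consecutive} incident faces, at least one is a triangle (this is exactly your ``three triangles and two bridge quadrilaterals, with the two quadrilaterals not adjacent'' picture, just phrased as an invariant). It then notes that each of the eight listed vertices, say $L_2$, fails this property: on one side of the edge $L_2U_1$ the face is the rhombus $L_2LUU_1$, and on the other side it cannot be a triangle because $L_2$ and $U_1$ have no common neighbour by \Cref{l_no_triangle}. Done.

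Compared to your route, the invariant formulation collapses your whole second paragraph: instead of identifying $\rho$ as a bridge quadrilateral of $\mathcal A'$, locating the second kernel vertex of $\mathcal A'$ on $\rho$, and splitting into the cases ``$p$'' and ``$x$'', the paper simply says ``a kernel vertex has property $P$; these eight vertices do not''. What your approach buys is that your first paragraph makes explicit something the paper only gestures at (``see \Cref{F4}''): namely that the triangles and rhombi of an apricot are genuine faces of the ambient penny graph. Your justification via the rhombus-diagonal bound is fine; note only that your general claim ``any unit $4$-cycle bounds a face'' needs the $4$-cycle to be induced, which you verified for the bridge quadrilaterals anyway. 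Your separate derivation of the ``in particular'' clause is also sound—and in fact the paper does not spell this step out at all—so on that point your write-up is more complete than the original.
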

\begin{proof}
	Note that each vertex of a kernel has the following property: among any two consecutive adjacent faces, at least one is a triangle, see \Cref{F4}. On the other hand, by \Cref{l_no_triangle}, none of the eight vertices in the statement have this property.
\end{proof}

\begin{Lemma} \label{l_5_in_core}
	A vertex of degree $5$ such that all its neighbors are also of degree $5$ belongs to a kernel.
\end{Lemma}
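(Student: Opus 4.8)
The plan is to show that the five faces incident to the given vertex $A$ are forced into a rigid pattern --- two triangles sharing an edge at $A$, together with one further triangle --- and then to read off a kernel from the two edge-sharing triangles, with $A$ playing the role of $L$.

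I would begin by fixing the clockwise cyclic order $A_1,\dots,A_5$ of the neighbours of $A$, which all lie on the unit circle centred at $A$, and recording two local facts. (i) For cyclically consecutive $A_i,A_{i+1}$, the face occupying the corner at $A$ between the edges $AA_i$ and $AA_{i+1}$ is the triangle $AA_iA_{i+1}$ if and only if $A_i\sim A_{i+1}$: in that case $AA_iA_{i+1}$ is a triangle with all three sides of unit length, and every point of it lies within distance $1/\sqrt3<1$ of one of its three corners, so it contains no other vertex of the graph and hence bounds a face; the converse is clear. (ii) If $A_i$ and $A_j$ are not cyclically consecutive, then the angular gap between $AA_i$ and $AA_j$ on the side containing a single other neighbour of $A$ is at least $2\pi/3$ by \Cref{l_angles}, which forces $|A_iA_j|\ge 2\sin(\pi/3)=\sqrt3>1$, and therefore $A_i\not\sim A_j$.

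Next I would apply \Cref{l_triangle} to each edge $AA_i$: as $A$ and $A_i$ are adjacent and both of degree $5$, they have a common neighbour, which lies in $\{A_1,\dots,A_5\}\sm\{A_i\}$ and is adjacent to $A_i$, hence by (ii) is one of $A_{i-1},A_{i+1}$. So every $A_i$ is adjacent to at least one of its two cyclic neighbours. Setting $S=\{\,i\in\Z/5\Z: A_i\sim A_{i+1}\,\}$, this says $\Z/5\Z\sm S$ contains no two cyclically consecutive indices, so $|S|\ge 3$; on the other hand \Cref{l_deg5}, via (i), says $S$ contains no three cyclically consecutive indices, so $|S|\le 3$. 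Hence $|S|=3$, its complement is a pair of non-adjacent indices, and therefore $S$ consists of two consecutive indices together with one isolated one. Relabelling, I may assume $AA_1A_2$ and $AA_2A_3$ are both triangular faces, so that $A_1\sim A_2\sim A_3$ while $A_1\not\sim A_3$. Then $(L,U,R,D)=(A,A_1,A_2,A_3)$ meets the hypothesis of \Cref{l_apricot} exactly: the five pairs $AA_1,A_1A_2,A_2A_3,A_3A,AA_2$ are edges and all four vertices have degree $5$ ($A$ by assumption, the others as neighbours of $A$). So $\{A,A_1,A_2,A_3\}$ is a kernel containing $A$, as desired.

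The one place that will need genuine care is fact (i) together with the combinatorial step: one must verify that adjacency of two consecutive neighbours exactly detects a triangular corner face (this is where planarity and the unit-distance condition enter), and that the two conditions ``$\Z/5\Z\sm S$ has no two consecutive indices'' and ``$S$ has no three consecutive indices'' really force the single ``two consecutive plus one isolated'' pattern, with no exceptional configurations. After that, the conclusion is just an unravelling of the definition of a kernel.
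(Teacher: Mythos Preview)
Your proof is correct, but it takes a longer route than the paper's. The paper's argument is a two-line parity trick: by \Cref{l_triangle}, each $A_i$ is adjacent to some other $A_j$, so the graph on $\{A_1,\dots,A_5\}$ induced by these adjacencies has no isolated vertex; since five vertices cannot be perfectly matched, some $A_j$ is adjacent to two others $A_i,A_k$, and then $(L,U,R,D)=(A,A_i,A_j,A_k)$ already satisfies the kernel condition. No geometry beyond \Cref{l_triangle} is used---in particular, neither your fact (i) about triangular faces, nor fact (ii) about non-consecutive neighbours, nor \Cref{l_deg5}.

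Your approach, by contrast, pins down the full face pattern around $A$ (exactly three triangular corners, two of them sharing an edge) before reading off the kernel. This is more work but also yields more structural information, and your combinatorial analysis of $S\subset\Z/5\Z$ is clean and correct. One small point worth making explicit in fact (ii): since the arc carrying the two other neighbours has angular measure at least $3\cdot\pi/3=\pi$, the arc carrying a single neighbour is the smaller of the two, so it really is the geometric angle $\angle A_iAA_j$ and the chord-length bound $|A_iA_j|\ge\sqrt{3}$ applies. With that noted, everything goes through.
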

\begin{proof}
	Let $A_1, A_2, A_3, A_4, A_5$ be the neighbors $A$, each of degree $5$. For all $i$, $A$ and $A_i$ are adjacent vertices of degree $5$. Therefore, \Cref{l_triangle} implies that they have a common neighbor, i.e. that $A_i$ is adjacent to $A_j$ for some $j\neq i$. Since five neighbors of $A$ cannot be partitioned into disjoint pairs, some must form a triple, that is $A_i$ is adjacent to $A_j$, while $A_j$ is adjacent to $A_k$ for some $i,j,k$. It remains only to note that a quadruple $A, A_i, A_j, A_k$ is a kernel, as desired.
\end{proof}

To conclude this section, we show that the structural properties mentioned above are already enough to improve the upper bound from \Cref{tT}.

\begin{Theorem} \label{tmain_weak}
	Every penny graph on $n$ vertices in general position with $e$ edges satisfies $e/n \le 12/5$.
\end{Theorem}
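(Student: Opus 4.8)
The plan is to run a discharging argument on the vertices. Assign to every vertex $v$ the initial charge $\mu(v)=d(v)-\tfrac{24}{5}$, so that $\sum_v\mu(v)=2e-\tfrac{24}{5}n$ and the assertion $e/n\le 12/5$ is equivalent to $\sum_v\mu(v)\le 0$. By \Cref{l_deg5} every degree is at most $5$, so the only vertices with positive initial charge are those of degree $5$, each carrying exactly $+\tfrac15$; a vertex of degree $k\le 4$ has ``absorbing capacity'' $\tfrac{24}{5}-k\ge\tfrac45=4\cdot\tfrac15$, that is, one fifth of a unit for each of its (at most four) incident edges. The whole argument consists of moving the surplus so that \emph{no edge transports more than $\tfrac15$ of a unit into its endpoint of smaller degree}; granting this, every vertex of degree $k\le 4$ ends with charge at most $k-\tfrac{24}{5}+\tfrac{k}{5}\le 0$ and every vertex of degree $5$ ends at $0$, so the total charge is non-positive.

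The discharging rules are the following. A degree-$5$ vertex that lies in no kernel has, by \Cref{l_5_in_core}, a neighbour of degree at most $4$; it sends its $\tfrac15$ to one such neighbour. The degree-$5$ vertices that do lie in a kernel are, by \Cref{l_kernel_dis}, split into pairwise disjoint quadruples $\{L,U,R,D\}$, each carrying total surplus $4\cdot\tfrac15=\tfrac45$. For such a quadruple, \Cref{l_no_triangle} supplies four \emph{disjoint} pairs among the apricot vertices $L_1,L_2,U_1,U_3,R_1,R_2,D_1,D_3$, namely $\{L_2,U_1\}$, $\{U_3,R_1\}$, $\{R_2,D_1\}$, $\{D_3,L_1\}$, each of which contains a vertex of degree $<5$; picking one such degree-$\le 4$ vertex from each pair and sending $\tfrac15$ from the kernel to each of these four balances the kernel to total charge $0$. (Each of those recipients is, by the labelling in \Cref{l_apricot}, adjacent to exactly one of $L,U,R,D$, the vertex whose ``remaining neighbour'' it is.)

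It then remains to verify the displayed one-fifth-per-edge property. Every recipient of charge has degree at most $4$ and hence lies in no kernel, and to each portion of $\tfrac15$ it receives we associate a distinct incident edge: a portion sent by a non-kernel degree-$5$ neighbour $u$ is associated with the edge to $u$, while a portion sent by a kernel $K$ is associated with an edge from the recipient to a vertex of $K$ (which exists, as just noted). Since distinct kernels are vertex-disjoint, are disjoint from the non-kernel degree-$5$ vertices, and each non-kernel degree-$5$ vertex fires at most once, these associated edges are pairwise distinct, so a recipient of degree $k$ receives at most $\tfrac{k}{5}$. Consequently every vertex outside the kernels ends with non-positive charge, the four vertices of each kernel have final charges summing to $0$ (kernel vertices have degree $5$, so they never receive), and summing everything yields $2e-\tfrac{24}{5}n=\sum_v\mu(v)\le 0$.

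The step that needs the most care is the last one: a single low-degree vertex can be adjacent to several kernels at once, or to several degree-$5$ senders, and one must be certain that its at most four incident edges cannot be counted twice as charge carriers. The facts that make this work are the disjointness of kernels (\Cref{l_kernel_dis}) together with the elementary observation, extracted from \Cref{l_apricot}, that each of the eight special apricot vertices is a neighbour of a kernel vertex of its own kernel and of none of the other three; I would isolate these before the verification. The only other point to watch is degenerate boundary behaviour such as pendant vertices and the outer face, but this is harmless here because a vertex of small degree only has a larger absorbing capacity and never sends charge.
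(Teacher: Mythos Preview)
Your argument is correct and is essentially the same discharging proof as in the paper, with the direction of the charge flow reversed: the paper assigns $ch(A)=5-\deg(A)$ and has every vertex of degree at most $4$ push $q=\tfrac15$ to each degree-$5$ neighbour (then averages inside kernels), whereas you assign $\mu(v)=\deg(v)-\tfrac{24}{5}$ and have degree-$5$ vertices push $\tfrac15$ outward. Both versions rest on exactly the same structural inputs (\Cref{l_deg5}, \Cref{l_5_in_core}, \Cref{l_no_triangle}, \Cref{l_kernel_dis}); the paper's orientation makes the low-degree side trivial (a degree-$k$ vertex loses at most $kq$) and so sidesteps your edge-association bookkeeping, but the two computations are equivalent.
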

\begin{proof}
	We argue by the discharging method. Initially, we place $ch(A) \coloneqq 5-\deg(A)$ units of charge onto each vertex $A$. On the first discharging stage, each vertex $A$ of degree at most $4$ gives $q$ units of charge to each of its neighbors of degree $5$, where the value of $q$ will be specified later. On the second discharging stage, we redistribute the charges within each kernel evenly. To give a lower bound on the resulting charge $ch'(A)$ of a vertex $A$ at the end of this procedure, let us consider the following three cases separately.
	\begin{itemize}
		\item If $\deg(A)\le 4$, then it is easy to see that $ch'(A) \ge ch(A) - \deg(A)q\ge 1-4q$.
		\item If a vertex of degree $5$ does not belong to any kernel, then \Cref{l_5_in_core} guarantees that it gets at least $q$ units of charge at the first phase. Moreover, the second phase does not affect its charge at all.
		\item \Cref{l_no_triangle} implies that each kernel receive at least $4q$ units of charge at the first stage. Since different kernels are disjoint by \Cref{l_kernel_dis}, we conclude that after the even redistribution at the second phase, each vertex of a kernel has at least $q$ units of charge.
	\end{itemize}

	\noindent
	To balance these lower bounds, we pick the value of $q$ such that $1-4q=q$, namely $q=1/5$. This choice yields that $ch'(A) \ge 1/5$ for every vertex $A$. Now the degree sum formula implies that
	\begin{equation*}
		5n-2e = \sum_{A} \big(5-\deg(A)\big) = \sum_{A} ch(A) = \sum_{A} ch'(A) \ge \sum_{A} \frac{1}{5} = \frac{n}{5},
	\end{equation*}
	which is equivalent to the desired inequality $e/n \le 12/5$.
\end{proof}

\section{Structure of the second neighborhood: proof of Theorem~\ref{tmain}} \label{s3}

\vspace{-1.5mm}
Let us call a vertex of degree $5$ \textit{popular} if at most one of its neighbors is of degree $4$, while all the other neighbors are of degree $5$, and let us call a vertex \textit{unpopular} otherwise. In other words, we will call a vertex $A$ \textit{unpopular} if either
\begin{itemize}
	\item $\deg(A) \le 4$, or
	\item $\deg(A)=5$ and at least one of its neighbors is of degree at most $3$, or
	\item $\deg(A)=5$ and at least two of its neighbors are of degree $4$.
\end{itemize} 
The following theorem is our main result about the structure of the second neighborhood.

\begin{Theorem} \label{tn2}
	Every vertex of degree $4$ in a penny graph with vertices in general position has an unpopular neighbor that does not belong to a kernel.
\end{Theorem}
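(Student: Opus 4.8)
The plan is to argue by contradiction. Suppose $v$ has degree $4$ and every neighbor of $v$ is popular or belongs to a kernel. Since a vertex of degree at most $4$ is unpopular and, having degree less than $5$, cannot lie in a kernel, it follows at once that all four neighbors of $v$ have degree exactly $5$. I would then record what each alternative gives. If a neighbor $w$ is popular, then $v$ is the unique degree-$4$ neighbor of $w$ and the other four neighbors of $w$ have degree $5$. If $w$ belongs to a kernel, then among the five neighbors of $w$ there are three further kernel vertices (of degree $5$) together with two apricot-corner vertices, so $v$ must be one of those two corners; \Cref{l_apricot} then pins down two further neighbors of $v$ (the vertices cyclically adjacent to $v$ on the apricot), $v$ is adjacent to exactly one kernel vertex of that apricot, and \Cref{l_no_triangle} restricts which of the corner vertices near $v$ can still have degree $5$.

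Next I would analyze the four faces around $v$. By \Cref{l_deg5} at most two of them are triangles (so at least two are not), and since the minimum distance is $1$, a face at $v$ is a triangle precisely when the angle of $v$ in it equals $\pi/3$. For a quadrilateral face at $v$, which is a rhombus by \Cref{l_unit}, the diagonal through $v$ has length $2\cos(\theta/2)$ where $\theta$ is the angle of $v$; since this length is at least $1$ we get $\theta\le 2\pi/3$, and $\theta=2\pi/3$ would force that diagonal to equal $1$, making the opposite vertex of the rhombus a fifth neighbor of $v$, which is impossible. Hence every quadrilateral face at $v$ has angle strictly between $\pi/3$ and $2\pi/3$. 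Consequently, if exactly two faces of $v$ are triangles and the other two are quadrilaterals, the four angles at $v$ sum to strictly less than $2\pi$, a contradiction; so in that configuration at least one non-triangular face of $v$ has at least five sides. I would also dispose here of the case in which all four neighbors of $v$ lie in kernels: by the apricot structure each such neighbor is incident (together with $v$) to a triangular face of $v$, and since $v$ has at most two triangular faces by \Cref{l_deg5} this is extremely restrictive and reduces to the contradiction just obtained (or to having three triangular faces, again contradicting \Cref{l_deg5}).

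It remains to treat the cases in which at least one neighbor of $v$ is popular and $v$ has at most one triangular face (the two-triangular-face case having been reduced above to the presence of a large face). Here the rigidity of the second neighborhood does the work. For a popular neighbor $w$, applying \Cref{l_triangle} to $w$ together with each of its four degree-$5$ neighbors forces triangular faces around $w$ in a constrained cyclic pattern (the minimal one consisting of two triangles not incident to $v$), which in turn fixes the shape of the faces of the drawing flanking the edge $vw$; for an in-kernel neighbor the apricot is already rigid by \Cref{l_apricot}. Walking around $v$ and assembling these local pictures along the four edges at $v$, whose angles at $v$ must total $2\pi$, I would show that some path-angle inequality of \Cref{l_angles} is forced to equality where it may not be, producing three collinear vertices and contradicting general position --- the same mechanism used in the proofs of \Cref{l_deg5}--\Cref{l_no_triangle}. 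Faces with five or more sides are handled separately using the bound $\theta\le 2\pi/3$ valid for every angle at a degree-$5$ vertex, which constrains such a face since all but at most two of its vertices have degree $5$.

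The main obstacle is the sheer number of cases: they are organized by the number and cyclic arrangement of the triangular faces at $v$ (hence by the distribution of its four angles) and, for each non-triangular face, by whether the two flanking neighbors of $v$ are popular or kernel vertices. The genuinely hard cases are those in which $v$ has zero or one triangular face, since there the first-neighborhood picture is least rigid and one must propagate the rigidity coming from the ``popular'' and ``kernel'' hypotheses through the second neighborhood back to $v$. Keeping careful track of which vertices are thereby forced to be adjacent --- and hence which angles at $v$ are pinned down --- without losing generality is the delicate part, and the occasional large face requires its own short argument.
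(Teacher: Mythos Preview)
Your outline has a genuine gap: the final third of the argument (``walking around $v$ and assembling these local pictures \dots\ I would show that some path-angle inequality is forced to equality'') is not a proof but a hope, and the paper shows that hope is unfounded. After the reduction to ``$A$ is not on any apricot,'' the paper classifies each edge from $A$ to a popular neighbor into three combinatorial Types according to how the five neighbors of that popular vertex pair up into triangles; it then proves (i) a Type~III edge immediately yields an unpopular neighbor, (ii) two cyclically consecutive Type~I edges are impossible, and (iii) four Type~II edges are impossible. Steps (ii) and (iii) are the heart of the matter and they do \emph{not} follow from first-neighborhood angle bookkeeping of the kind you propose: each is proved by exhibiting a specific $13$- or $19$-vertex configuration that must occur, and then ruling it out by a continuous ``rotation'' argument that monotonically shortens a critical distance until only a collinear configuration survives. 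For the $19$-vertex case the resulting one-variable inequality is delicate enough that the paper resorts to a computer check. Nothing in your sketch approaches this; the equality cases of \Cref{l_angles} alone will not produce the contradiction, because in the extremal configurations the relevant angle sums are \emph{not} tight at $v$ or its neighbors --- the slack is absorbed into the second neighborhood and only a global distance comparison exposes it.

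There are also two smaller issues. First, your face analysis is partly circular: you conclude that with two triangular and two quadrilateral faces at $v$ the angles sum to less than $2\pi$, but all this buys you is that some face has at least five sides, and your treatment of that case (``handled separately using $\theta\le2\pi/3$'') is a single sentence with no argument. Second, the paper's first move is to dispose of the case where $A$ lies on an apricot (\Cref{p_near_kernel}); once that is done, \emph{no} neighbor of $A$ can lie in a kernel, and the kernel branch of your dichotomy simply disappears. Your attempt to carry that branch along throughout the analysis adds cases without adding leverage. In short, the missing idea is the Type~I/II/III edge classification together with the two forbidden-subgraph lemmas (one computer-assisted); the angle-sum mechanism of \Cref{l_deg5}--\Cref{l_no_triangle} is necessary scaffolding but not sufficient on its own.
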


Prior to giving the proof, let us show that this result, combined with the discharging argument from the previous section, indeed yields \Cref{tmain}. As earlier, we place $ch(A) \coloneqq 5-\deg(A)$ units of charge onto each vertex $A$. The first discharging stage is a bit different this time: each vertex $A$ of degree at most $4$ gives $q$ units of charge to each neighbor $B$ of degree $5$ if and only if either $B$ belongs to a kernel, or $B$ does not have another neighbor of degree at most $4$ distinct from $A$; otherwise $A$ gives $q/2$ units of charge to $B$. On the second discharging stage, we again redistribute the charges within each kernel evenly. Let us now analyze the resulting charge $ch'(A)$ of a vertex $A$ at the end of this procedure.
\begin{itemize}
	\item If $\deg(A)\le 4$, then \Cref{tn2} guarantees that $A$ cannot give more than $3q+q/2$ units of charge. Hence, $ch'(A) \ge ch(A) - 7q/2\ge 1-7q/2$.
	\item If a vertex of degree $5$ does not belong to a kernel, then \Cref{l_5_in_core} ensures that it has at least one neighbor of degree at most $4$. If such a neighbor is unique, then it gives $q$ units of charge to the vertex, while if there are at least $2$ such neighbors, then each of them gives $q/2$ units of charge. In any case, the vertex gets at least $q$ units of charge at the first phase, and does not lose anything afterwards.
	\item As in the previous section, \Cref{l_no_triangle} implies that each kernel receive at least $4q$ units of charge at the first stage. Since different kernels are disjoint by \Cref{l_kernel_dis}, we conclude that after the even redistribution at the second phase, each vertex of a kernel has at least $q$ units of charge.
\end{itemize}

\noindent
To balance these lower bounds, we pick the value of $q$ such that $1-7q/2=q$, namely $q=2/9$. This choice yields that $ch'(A) \ge 2/9$ for every vertex $A$. Finally, the degree sum formula implies that
\begin{equation*}
	5n-2e = \sum_{A} \big(5-\deg(A)\big) = \sum_{A} ch(A) = \sum_{A} ch'(A) \ge \sum_{A} \frac{2}{9} = \frac{2}{9}n,
\end{equation*}
i.e. that $e/n \le 43/18$, and completes the proof of \Cref{tmain}. Thus it remains only to verify the structural property from \Cref{tn2}, to which we devote the rest of this paper.

\subsection{Proof of Theorem~\ref{tn2}}

Throughout this section, let $A$ be a vertex of degree $4$ in a penny graph with vertices in general position. Our proof that $A$ has an unpopular neighbor that does not belong to a kernel consists of several special cases the last of which is even computer-assisted. The first and the simplest case is when $A$ belongs to an apricot.

\begin{Proposition} \label{p_near_kernel}
	\hspace{-1mm}If $A$ belongs to an apricot, then $A$ has an unpopular neighbor that does not belong to a kernel.
\end{Proposition}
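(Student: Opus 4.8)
The plan is to exploit that membership in an apricot almost completely determines the local structure. Since $A$ lies in an apricot, it is one of the ten vertices $L_1,L_2,U_1,U_2,U_3,R_1,R_2,D_1,D_2,D_3$ around some kernel $L,U,R,D$ labeled as in \Cref{l_apricot}; moreover, as the proof of \Cref{l_apricot} shows, all the angle inequalities used there are in fact equalities, so the apricot is rigid -- every face incident to a kernel vertex is an equilateral triangle or a unit square, and each apricot vertex has prescribed ``automatic'' neighbors, namely its neighbor(s) on the kernel together with its two neighbors on the apricot cycle (the latter by \Cref{l_apricot}). The dihedral symmetry of the apricot splits the ten vertices into three orbits, $\{L_1,L_2,R_1,R_2\}$, $\{U_1,U_3,D_1,D_3\}$, and $\{U_2,D_2\}$, so it suffices to treat $A\in\{L_2,U_1,U_2\}$.

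First let $A=L_2$. Then $L_1$ is a neighbor of $A$ and, by \Cref{l_kernel_dis}, $L_1$ lies in no kernel. If $\deg(L_1)\le4$ then $L_1$ is unpopular and we are done, so assume $\deg(L_1)=5$. Since $D_3$ and $L_1$ are consecutive on the apricot and, by \Cref{l_no_triangle}, share no common neighbor, \Cref{l_no_triangle} forces $\deg(D_3)\le4$; but then $L_1$ has the two distinct degree-$\le4$ neighbors $A$ and $D_3$, hence is unpopular. The case $A=U_2$ is the same with the neighbor $U_1$ in place of $L_1$: $U_1$ lies in no kernel by \Cref{l_kernel_dis}, and if $\deg(U_1)=5$ then \Cref{l_no_triangle} applied to the pair $L_2,U_1$ gives $\deg(L_2)\le4$, so $U_1$ has the degree-$\le4$ neighbors $A$ and $L_2$ and is unpopular. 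In both cases the point is to pick the apricot-neighbor of $A$ whose associated \Cref{l_no_triangle} pair does not involve $A$ itself.

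The case $A=U_1$ is the main obstacle, and here one genuinely has to look into the second neighborhood rather than quote \Cref{l_no_triangle}, because the relevant pair $L_2,U_1$ is already ``spent'' by $\deg(A)=4$. The neighbors of $A$ are $U$, $L_2$, $U_2$, and one further vertex $Y$; a short distance check in the rigid coordinates shows that $Y$ lies in neither the apricot nor the kernel $L,U,R,D$. Among $L_2,U_2,Y$, only $L_2$ is guaranteed to lie in no kernel (\Cref{l_kernel_dis}), whereas $U_2$ (and a priori $Y$) could. If one of $L_2,U_2,Y$ has degree $\le4$, it is an unpopular neighbor of $A$ lying in no kernel and we are done; so we may assume all three have degree $5$, and it remains to rule out that $L_2$ is popular while each of $U_2$ and $Y$ is popular or contained in a kernel.

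To do this I would use the rigidity at $A$: the two faces of $A$ on the $U$-side are the forced equilateral triangle $AUU_2$ and the forced unit square $AULL_2$, which confine the fourth edge $AY$ and the two remaining faces to a wedge of angle $2\pi-\pi/3-\pi/2$; bounding the two remaining angles at $A$ via \Cref{l_angles}, and propagating the consequences of popularity around the apricot ($L_2$ popular forces $\deg(L_1)=5$, hence $\deg(D_3)\le4$ by \Cref{l_no_triangle}; similarly $U_2$ popular forces $\deg(U_3)=5$, hence $\deg(R_1)\le4$), I expect to reach a configuration in which three vertices become collinear, contradicting general position exactly as in the proofs of \Cref{l_deg5}, \Cref{l_triangle}, and \Cref{l_no_triangle}. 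The sub-cases where $U_2$ or $Y$ lies in a kernel would then be dispatched by overlaying that kernel's forced local structure (every two consecutive incident faces including a triangle, as in \Cref{l_kernel_dis}) with the structure already present around $A$. Ruling out this kernel-membership and pinning down the location of $Y$ precisely is where I expect the real work to lie.
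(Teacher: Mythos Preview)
Your handling of the cases $A=L_2$ and $A=U_2$ is correct and matches the paper's argument. However, there is a genuine gap in the case $A=U_1$, and it stems in part from a misconception: the apricot is \emph{not} rigid. The proof of \Cref{l_apricot} forces the twelve ``isolated'' angles around $L,U,R,D$ to equal $\pi/3$, but the four angle \emph{sums} $\angle L_2LU+\angle LUU_1=\pi$, etc., leave one free parameter (for instance $\alpha=\angle L_2LU\in[\pi/3,2\pi/3]$). In particular, the face $U_1ULL_2$ is a rhombus, not a unit square, so there are no ``rigid coordinates'' in which to locate $Y$, and the wedge you compute for the remaining faces of $A$ depends on $\alpha$. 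Beyond this, your treatment of $A=U_1$ is openly speculative (``I expect to reach a configuration in which three vertices become collinear''; ``pinning down the location of $Y$ precisely is where I expect the real work to lie''), and there is no concrete mechanism offered for producing the collinearity.

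The paper avoids all of this by never looking at $U_2$ or at the extra neighbor $Y$: it shows directly that $L_2$ is unpopular. If $\deg(L_2)\le4$ we are done; if $\deg(L_1)\le4$ then $L_2$ has the two low-degree neighbors $L_1$ and $A=U_1$. In the remaining case $\deg(L_1)=\deg(L_2)=5$, apply \Cref{l_mobius} to the degree-$5$ triangle $LL_1L_2$. A M\"obius strip around it would make $L_2$ and $U_1$ share a common neighbor, contradicting \Cref{l_no_triangle}; hence $L_1$ and $L_2$ share a common neighbor $B\neq L$. If $\deg(B)=5$, then $L_2,L,L_1,B$ is a kernel containing $L$, contradicting \Cref{l_kernel_dis}; so $\deg(B)\le4$, and $L_2$ again has two low-degree neighbors $B$ and $A$. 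This is the missing idea: use \Cref{l_mobius} on $LL_1L_2$ together with the disjointness of kernels, rather than attempt a coordinate or angle-chasing argument at $A$.
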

\begin{proof}
	Label the apricot as in \Cref{l_apricot}. Due to the symmetry of this construction, we can assume without loss of generality that $A$ plays the role of either $L_2$, or $U_1$, or $U_2$, see \Cref{F4}.
	
	In the latter case, note that by \Cref{l_no_triangle}, either $L_2$ or $U_1$ is of degree less than $5$. That is, either $U_1$ is of degree at most $4$ by itself, or it has at least two neighbors of degree at most $4$: $L_2$ and $U_2=A$. In either event, $U_1$ is unpopular. Besides that, $U_1$ does not belong to a kernel by \Cref{l_kernel_dis}, as desired.
	
	In case $A=L_2$, we apply the same argument to the pair $D_3$ and $L_1$, which yields that $L_1$ is unpopular and does not belong to a kernel.
	
	The last remaining case $A=U_1$ is trickier. We show that $L_2$ is always the desired unpopular vertex that does not belong to a kernel. As earlier, if $L_1$ or $L_2$ is of degree less than $5$, then we are done. So we can assume without loss of generality that $\deg(L_1)=\deg(L_2)=5$. Apply \Cref{l_mobius} to the triangle $LL_1L_2$. If there is a \MS{} around it, then $L_2$ and $U_1$ share a common neighbor, which contradicts \Cref{l_no_triangle}. So \Cref{l_mobius} implies that $L_1$ and $L_2$ share a common neighbor different from $L$ that we label as $B$, see \Cref{F7}. If $\deg(B)=5$, then $L_2,L,L_1,B$ is a kernel sharing a vertex $L$ with another kernel $L,U,R,D$, which contradicts \Cref{l_kernel_dis}, and thus\footnote{Note that $B$ was called a `special second neighbor of $L$' in \cite{Toth97}, where it was also observed that if $\deg(L_1)=\deg(L_2)=5$, then $\deg(B) \le 4$.} $\deg(B) \le 4$. Hence, $L_2$ is unpopular, because it has at least two neighbors of degree at most $4$: $B$ and $U_1=A$.
\end{proof}

\begin{figure}[!htb]
	\centering
	\includegraphics[scale=1.50]{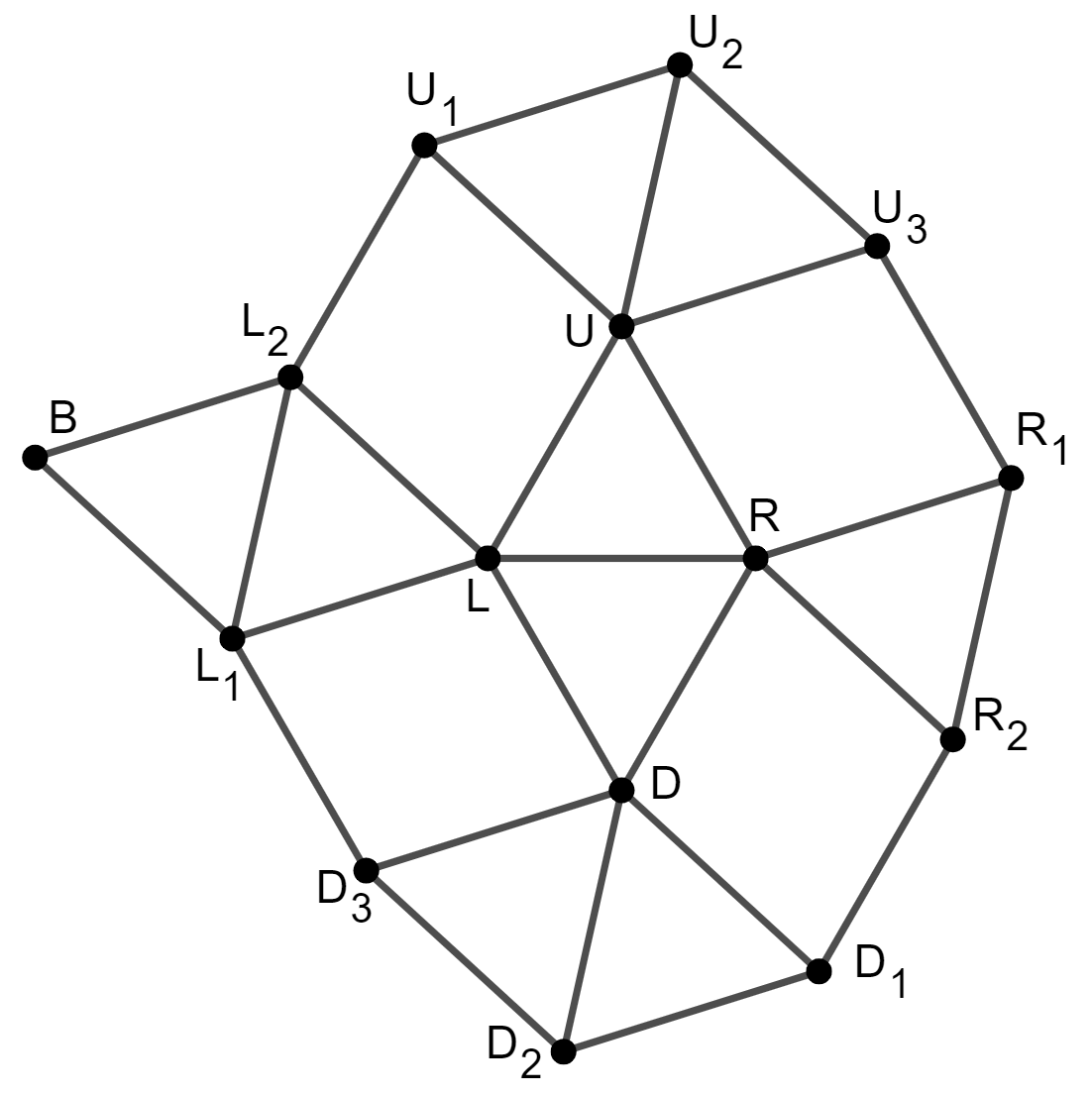}
	\caption{If $\deg(L_1)=\deg(L_2)=\deg(B)=5$, then $L_2,L,L_1,B$ is a kernel.}
	\label{F7}
\end{figure}

So in the rest of this proof, we can assume without loss of generality that $A$ does not belong to an apricot, and thus its neighbors do not belong to a kernel. Then our aim is just to show that $A$ has at least one unpopular neighbor. To this end, let us study the local structure between $A$ and its popular neighbors.

Suppose that $B$ is a popular vertex, and label its neighbors, including $A$, as $C_1, C_2, C_3, C_4, C_5$ in clockwise order. Note that if some $C_i$ is not adjacent to any other neighbor of $B$, then \Cref{l_triangle} implies that $\deg(C_i) \le 4$. Since $B$ is popular, it has at most one such `lonely' neighbor.

If exactly one $C_i$ is `lonely', then it must be $A$, while four remaining neighbors of $B$ split into two pairs of adjacent vertices, see \Cref{F81}. In this case, we say that the edge $AB$ is of Type~I.

If every $C_i$ is adjacent to some other neighbor of $B$, then after a possible cyclic renumbering, we can assume without loss of generality that $C_1$ is adjacent to $C_2$, while both $C_3$ and $C_5$ are adjacent to $C_4$. If all three vertices $C_3$, $C_4$ and $C_5$ are of degree $5$, then $B,C_3,C_4,C_5$ is a kernel, which contradicts our assumption that $A$ does not belong to an apricot. Therefore, there are only two different cases up to the symmetry: $A=C_5$, in which we say that the edge $AB$ is of Type~II, and $A=C_4$, in which we say that $AB$ is of Type~III, see Figures~\ref{F82} and~\ref{F83}, respectively.

\begin{figure}[!htb]
	\centering
	\begin{subfigure}[b]{.32\linewidth}
		\centering
		\includegraphics[scale=1.60]{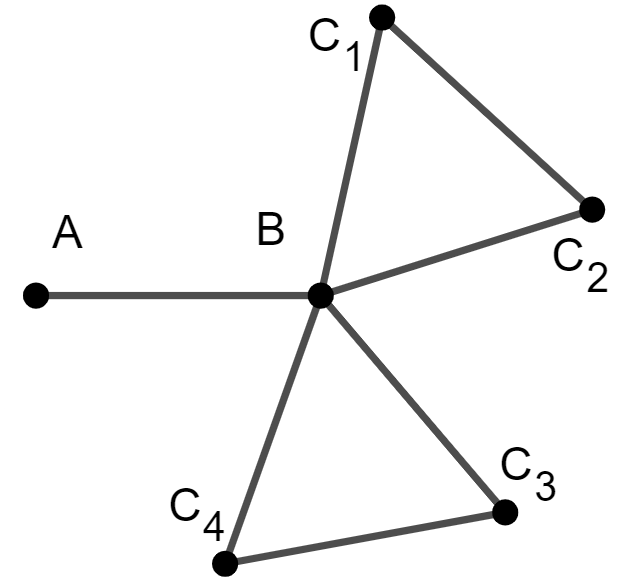}
		\caption{Type I}
		\label{F81}
	\end{subfigure}
	\begin{subfigure}[b]{.32\linewidth}
		\centering
		\includegraphics[scale=1.60]{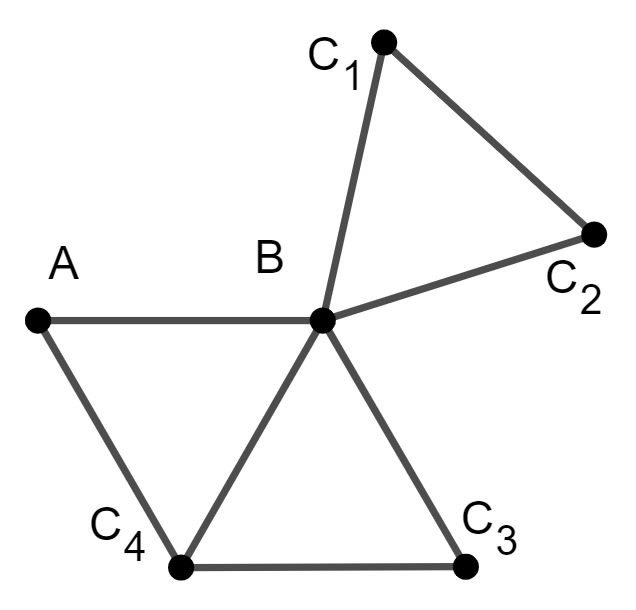}
		\caption{Type II}
		\label{F82}
	\end{subfigure}
	\begin{subfigure}[b]{.32\linewidth}
		\centering
		\includegraphics[scale=1.60]{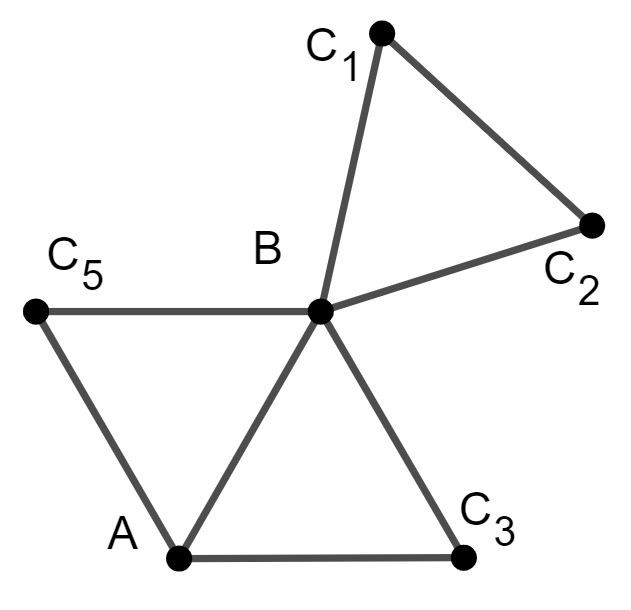}
		\caption{Type III}
		\label{F83}
	\end{subfigure}
	\captionsetup{justification=centering}
	\caption{All three possible edge types between a vertex $A$ of degree $4$ that does not belong to an apricot \newline and its popular neighbor $B$.}
	\label{F8}
\end{figure}

It is almost trivial to find an unpopular neighbor of $A$ (even two of them) when at least one edge coming from $A$ is of Type~III.

\begin{Proposition} \label{p_typeiii}
	In the notation of \Cref{F83}, both $C_3$ and $C_5$ are unpopular.
\end{Proposition}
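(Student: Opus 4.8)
By the evident left--right symmetry of \Cref{F83} (which swaps $C_3\leftrightarrow C_5$ and $C_1\leftrightarrow C_2$ while fixing $A$ and $B$), it suffices to prove that $C_3$ is unpopular, and the argument for $C_5$ will then be word-for-word the same. First I would record what the Type~III picture already supplies: since $B$ is popular and $\deg(C_4)=\deg(A)=4$, all of $C_1,C_2,C_3,C_5$ have degree $5$, and among the neighbours of $B$ the only adjacent pairs are $\{C_1,C_2\}$, $\{C_3,C_4\}$ and $\{C_4,C_5\}$; in particular $C_4=A$ is the \emph{only} neighbour of $B$ adjacent to $C_3$.

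Next I would pin down the first neighbourhood of $A=C_4$ itself. Being of degree $4$, it has exactly one further neighbour $F$ besides $B,C_3,C_5$; and since $C_3AB$ and $ABC_5$ are equilateral triangular faces (they are the faces $BC_3C_4$ and $BC_4C_5$ of \Cref{F83}) sharing the edge $AB$, these are two consecutive faces around $A$. If $F$ were adjacent to $C_3$, then $FAC_3$ would be a triangular face adjacent to $C_3AB$, producing three consecutive triangular faces $FAC_3,\ C_3AB,\ ABC_5$ around $A$, which is forbidden by \Cref{l_deg5}; the same argument on the other side shows $F$ is not adjacent to $C_5$. Combined with $|C_3C_5|=\sqrt3>1$ (forced by the two equilateral triangles at $B$), this gives the key fact: among the four neighbours $B,C_3,C_5,F$ of $A$, the vertex $B$ is the only one that is also a neighbour of $C_3$.

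Now I would suppose for contradiction that $C_3$ is popular. Since $A$ is a degree-$4$ neighbour of $C_3$, it must be the unique neighbour of $C_3$ of degree below $5$. As $A$ is adjacent to the neighbour $B$ of $C_3$, $A$ is not a `lonely' neighbour of $C_3$, so the analysis preceding \Cref{p_typeiii} applies with $C_3$ in the role of the popular vertex: the neighbours of $C_3$ split into one adjacent pair and a fan of three, and (because $A$ does not belong to an apricot, $C_3$ lies in no kernel, so the fan cannot consist of three degree-$5$ vertices, whence its low-degree vertex must be $A$) $A$ is one of the three fan vertices. By the previous paragraph $B$ is the only neighbour of $C_3$ adjacent to $A$; hence $A$ is not the central fan vertex — that one is adjacent to two neighbours of $C_3$ — so $A$ is an outer fan vertex and $B$ is the central one. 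But then the other outer fan vertex $N$ is adjacent to $B$ and differs from $A=C_4$ and from $C_3$, so $N$ is a neighbour of $B$, distinct from $C_3$ and $C_4$, that is adjacent to $C_3$, contradicting the first paragraph. Therefore $C_3$, and symmetrically $C_5$, is unpopular.

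The step I expect to be the crux is the second paragraph: establishing that $B$ is the \emph{only} neighbour of $A$ which is also a neighbour of $C_3$ is precisely what forces the fan around $C_3$ to collapse, and it depends on combining the local face count at $A$ (through \Cref{l_deg5}) with the rigid metric fact $|C_3C_5|=\sqrt3$. Once that is in hand, the contradiction is just a careful reapplication of the Type~I/II/III trichotomy with $C_3$ taking the role of $B$.
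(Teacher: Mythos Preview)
Your proof is correct and follows essentially the same route as the paper: assuming $C_3$ is popular, you run the Type~I/II/III trichotomy on the edge $AC_3$ (your ``$A$ is not lonely'' / ``$A$ is an outer fan vertex'' / ``$A$ is not the central fan vertex'' correspond exactly to excluding Types I, II, III) and close each case with \Cref{l_deg5}. The only difference is expository: you make explicit the intermediate fact that $B$ is the \emph{unique} common neighbour of $A$ and $C_3$ (via the three-consecutive-triangles argument at $A$ and $|C_3C_5|=\sqrt3$), whereas the paper's four-line proof leaves this step to the reader when asserting that Type~II forces a common neighbour of $B$ and $C_3$ other than $A$, and Type~III a common neighbour of $A$ and $C_3$ other than $B$.
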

\begin{proof}
	The edge $AC_3$ cannot be of Type~I, because $A$ and $B$ are not disjoint. If this edge is of Type~II, then $B$ and $C_3$ share a common neighbor different from $A$. If $AC_3$ is of Type~III, then $A$ and $C_3$ share a common neighbor different from $B$. Both these alternatives lead to three consecutive triangles, which contradicts \Cref{l_deg5}. Since all three possible types are excluded, we deduce that $C_3$ must be unpopular. The same argument works for $C_5$ as well.
\end{proof}

The other two types of edges require much more work to handle. Note that if the edge $AB$ is of Type~II, then $A$ and $B$ share a common neighbor, the vertex $C_4$ in \Cref{F82}. Moreover, $A$ and $C_4$ also share a common neighbor $B$, and thus the edge $AC_4$ is not of Type~I. Therefore, if each of the four edges of $A$ is either of Type~I or of Type~II, then the latter ones go in pairs, i.e. there are only three possible cases: all four edges are of Type~II, all four edges are of Type~I, or two cyclically consecutive edges of each type. In both the second and the third cases, there are two cyclically consecutive edges of Type~I. We begin by arguing that this is not possible.  

\begin{figure}[!htb]
	\centering
	\begin{subfigure}[b]{.45\linewidth}
		\centering
		\includegraphics[scale=1.70]{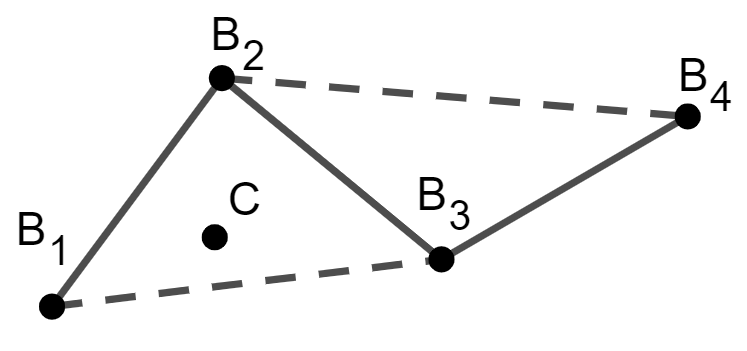}
		\caption{Non-convex path}
		\label{F91}
	\end{subfigure}
	\begin{subfigure}[b]{.45\linewidth}
		\centering
		\includegraphics[scale=1.70]{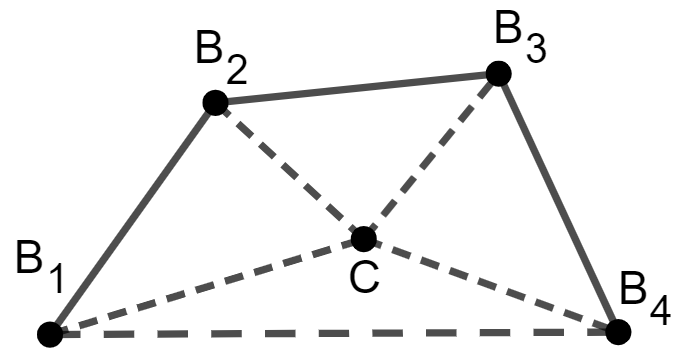}
		\caption{Convex path}
		\label{F92}
	\end{subfigure}
	\captionsetup{justification=centering}
	\caption{A vertex $C$ inside the convex hull of the path $B_1B_2B_3B_4$.}
	\label{F9}
\end{figure}

\begin{Lemma} \label{l_convex}
	The convex hull of a path $B_1B_2B_3B_4$ does not contain another vertex.
\end{Lemma}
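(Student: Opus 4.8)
The plan is to argue by contradiction: assume that some vertex $P$ other than $B_1,B_2,B_3,B_4$ lies in the convex hull of the path $B_1B_2B_3B_4$. By general position no three of the five points $P,B_1,B_2,B_3,B_4$ are collinear, so the hull is a genuine triangle or quadrilateral, $P$ lies on no line through two of the $B_i$, and hence $P$ avoids the boundary of the hull and lies in its interior. For the same reason the four directions $PB_1,PB_2,PB_3,PB_4$ are distinct with no two opposite, so they have a well-defined cyclic order around $P$ and cut the full angle at $P$ into four ``gaps'' $g_1,g_2,g_3,g_4$ with $g_1+g_2+g_3+g_4=2\pi$; moreover each $g_j$ is positive and strictly less than $\pi$, since a gap of size at least $\pi$ would put all four $B_i$ in a closed half-plane bounded by a line through $P$, contradicting that $P$ is interior to their hull.

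The second ingredient is that every path-edge subtends at most $\pi/3$ at $P$: for $i\in\{1,2,3\}$ the triangle $PB_iB_{i+1}$ is non-degenerate and satisfies $|B_iB_{i+1}|=1\le|PB_i|,|PB_{i+1}|$, so $\angle B_iPB_{i+1}$ — the smallest angle of that triangle, being opposite a shortest side — is at most $\pi/3$. I would then translate this into a statement about the gaps. If $B_i$ and $B_{i+1}$ are cyclically consecutive around $P$, then $\angle B_iPB_{i+1}$ equals the gap between them, which is therefore $\le\pi/3$. If instead $B_i$ and $B_{i+1}$ are cyclically opposite — one of the other two rays lying on each side of them — then $\angle B_iPB_{i+1}$ equals the smaller of the two ``two-gap'' sums joining the rays $PB_i$ and $PB_{i+1}$ around $P$, and hence both gaps in that sum are $\le\pi/3$. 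Thus each of the three path-edges certifies either one gap or two gaps to be $\le\pi/3$, according to whether its endpoints are consecutive or opposite in the cyclic order.

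To conclude I would run over the possible cyclic orders of $B_1,B_2,B_3,B_4$ around $P$, of which there are only three up to rotation and reflection: $(1,2,3,4)$, $(1,2,4,3)$, and $(1,3,2,4)$. In each case the three certificates of the preceding paragraph force at least three of the four gaps to be $\le\pi/3$, whence the fourth is $\ge2\pi-\pi=\pi$, contradicting $g_j<\pi$. For $(1,2,3,4)$ this is immediate, since all three path-edges join cyclically consecutive points, so $g_1,g_2,g_3\le\pi/3$ outright. For $(1,2,4,3)$ and $(1,3,2,4)$ one or two path-edges join cyclically opposite points, and one must split into the two sub-cases recording which of the two sides realizes that edge's angle; in every sub-case one still ends up with three distinct gaps that are $\le\pi/3$. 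This small covering bookkeeping for the latter two cyclic orders is the only step that requires genuine (but entirely routine) work — essentially the claim that the three ``small-angle'' certificates always cover at least three of the four gaps — while everything preceding it is automatic. Since all three cyclic orders lead to a contradiction, no such vertex $P$ can exist.
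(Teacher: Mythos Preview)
Your argument is correct. The key inequality—that each path edge $B_iB_{i+1}$ subtends an angle at most $\pi/3$ at $P$, since $|PB_i|,|PB_{i+1}|\ge 1=|B_iB_{i+1}|$—is exactly the mechanism the paper uses, and your gap-counting conclusion (three gaps $\le\pi/3$ forces the fourth $\ge\pi$) is sound. The ``routine bookkeeping'' you defer for the cyclic orders $(1,2,4,3)$ and $(1,3,2,4)$ does go through: in every sub-case one indeed finds three distinct gaps bounded by $\pi/3$, so the omission is harmless.

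The paper organizes the proof differently. Instead of a uniform cyclic-order analysis, it splits into two cases according to whether the path $B_1B_2B_3B_4$ is convex. When it is not (so the hull order differs from the path order), the extra point $C$ must fall into one of the triangles $B_1B_2B_3$ or $B_2B_3B_4$, and then the direct observation $|CB_2|<1$ (respectively $|CB_3|<1$) already gives the contradiction—no angle bookkeeping needed. Only in the convex case does the paper invoke the angle bound, and there the argument collapses to your case $(1,2,3,4)$: the three consecutive gaps sum to more than $\pi$, so one exceeds $\pi/3$. Your approach trades the paper's clean geometric shortcut in the non-convex case for a single uniform framework; the price is the extra sub-case analysis for the two non-trivial cyclic orders, but the payoff is that you never have to argue separately about where $P$ sits relative to the triangles $B_iB_{i+1}B_{i+2}$.
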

\begin{proof}
	Assume the contrary, i.e. that some vertex $C$ belongs to the convex hull. Our goal is to come to a contradiction by showing that the distance $|CB_i|$ is `too short' for some $i$.
	
	If the clockwise order of the vertices in the convex hull is different from their order in the path, as in \Cref{F91}, then we can assume without loss of generality that $C$ falls into the triangle $B_1B_2B_3$. In this case, it is easy to see that $|CB_2| < |B_1B_2|=|B_3B_2|$ since the unit circle centered at $B_2$ is convex, as desired.
	
	Otherwise, note that $\angle B_1CB_2 + \angle B_2CB_3 + \angle B_3CB_4 > \pi$, see \Cref{F92}, and thus at least one of the terms is greater than $\pi/3$. (Recall that these vertices of a penny graph are in general position, and thus the inequality must indeed be strict). If $\angle B_1CB_2 > \pi/3$, then this angle is not the smallest in the triangle $B_1CB_2$ and $B_1B_2$ is not its shortest side, as desired. The other two cases are symmetric.
\end{proof}

\begin{Lemma} \label{l_no_kifli}
	A penny graph with vertices in general position cannot contain the configuration depicted in \Cref{F101} as a subgraph.  
\end{Lemma}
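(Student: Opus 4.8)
The plan is to assume, towards a contradiction, that a penny graph $G$ in general position contains the crescent-shaped configuration of \Cref{F101}, and then to exhibit two of its vertices at distance strictly less than $1$, or else three of its vertices collinear. The first step is to rigidify the picture using \Cref{l_unit}: every triangular face of the configuration must be equilateral with side $1$ and every quadrilateral face a unit rhombus, so once the position of a single edge is fixed the whole configuration is determined up to the angles at the handful of hinges where consecutive faces meet along its spine. I would name these hinge angles, and note that each of them is an angle of a path in $G$, so \Cref{l_angles} gives a lower bound of $\pi/3$ for the angle at a single hinge and a lower bound of $\pi$ for the sum of the angles at two consecutive hinges of a length-three sub-path.

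Next I would isolate, running along the convex side of the crescent, a path $B_1B_2B_3B_4$ of length three and apply \Cref{l_convex}: no further vertex of $G$ — in particular, none of the remaining vertices of the configuration — may lie in the convex hull of $B_1B_2B_3B_4$. On the other hand, a crescent bends consistently in one direction, so combining this geometric constraint with the angle lower bounds above forces the ``far'' vertices of the configuration either to be pushed into that forbidden convex hull, or to come within distance less than $1$ of one of $B_1,\dots,B_4$; either outcome is the desired contradiction. In the sub-cases where $B_1B_2B_3B_4$ turns out not to be in convex position, I would instead reuse the internal argument from the proof of \Cref{l_convex} — that the largest angle of a triangle lies opposite its longest side — to locate a too-short edge directly, or to force the collinearity.

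The step I expect to be the main obstacle is the case distinction according to the hinge angles: the spine of the crescent can fold in several qualitatively different ways, and for each of them one must pin down exactly which vertex of the configuration ends up trapped and against which spine vertex it is too close. \Cref{l_convex} is tailored precisely to this situation, but to invoke it one first has to certify that the trapped vertex genuinely lies in the interior of the relevant convex hull rather than merely near its boundary — which is where general position, and hence the strictness of all the angle inequalities, must be used carefully.
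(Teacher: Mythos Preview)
Your plan diverges from the paper's argument in a way that leaves a real gap.

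The paper does \emph{not} argue statically. It performs a sequence of rotations of rigid pieces of the configuration about their hinge vertices, each rotation chosen so that one specific distance --- namely $|C_4C_5|$, between the innermost free neighbours on the two halves of the crescent --- can only \emph{increase}. \Cref{l_convex} is invoked solely to justify this monotonicity: it guarantees that during each rotation the two relevant points stay on opposite sides of the pivot line. After all rotations the configuration depends on just two parameters $x,y\in[\pi/3,2\pi/3]$, and an explicit trigonometric identity gives
\[
|C_4C_5|^2 = 3-2\sin\!\Big(x+\tfrac{\pi}{6}\Big)-2\sin\!\Big(y+\tfrac{\pi}{6}\Big)-2\cos\!\Big(x+y+\tfrac{\pi}{3}\Big),
\]
which is at most $1$ on the whole square, with equality only on the boundary $x=\pi/3$ or $y=\pi/3$, where three vertices become collinear. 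Since the rotations only increased $|C_4C_5|$, the original distance was already at most $1$, forcing the collinear extremal case.

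Your proposal instead hopes that for every admissible choice of hinge angles some vertex of the configuration lands in the convex hull of a spine path (or within distance less than $1$ of a spine vertex), so that \Cref{l_convex} delivers the contradiction directly. There is no reason to expect this: one can position the hinges so that no vertex is trapped inside any such hull and every spine vertex is at distance at least $1$ from the others, yet $|C_4C_5|<1$. The obstruction is a specific pair of \emph{non-spine} vertices drifting too close, and this is invisible to your convex-hull test. Moreover, the ``case distinction according to the hinge angles'' you anticipate is not finite --- the angles vary continuously --- and the fact that $|C_4C_5|=1$ is attained exactly on the collinear boundary is precisely what makes the deform-then-compute strategy necessary. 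A minor further point: your appeal to \Cref{l_unit} for rigidification assumes the cycles you see are faces of the ambient penny graph, which is not given; the rigidity of the triangles and rhombi follows simply from all their sides having unit length, independently of the face structure.
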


\begin{figure}[!htb]
	\centering
	\begin{subfigure}[b]{.45\linewidth}
		\centering
		\includegraphics[scale=1.50]{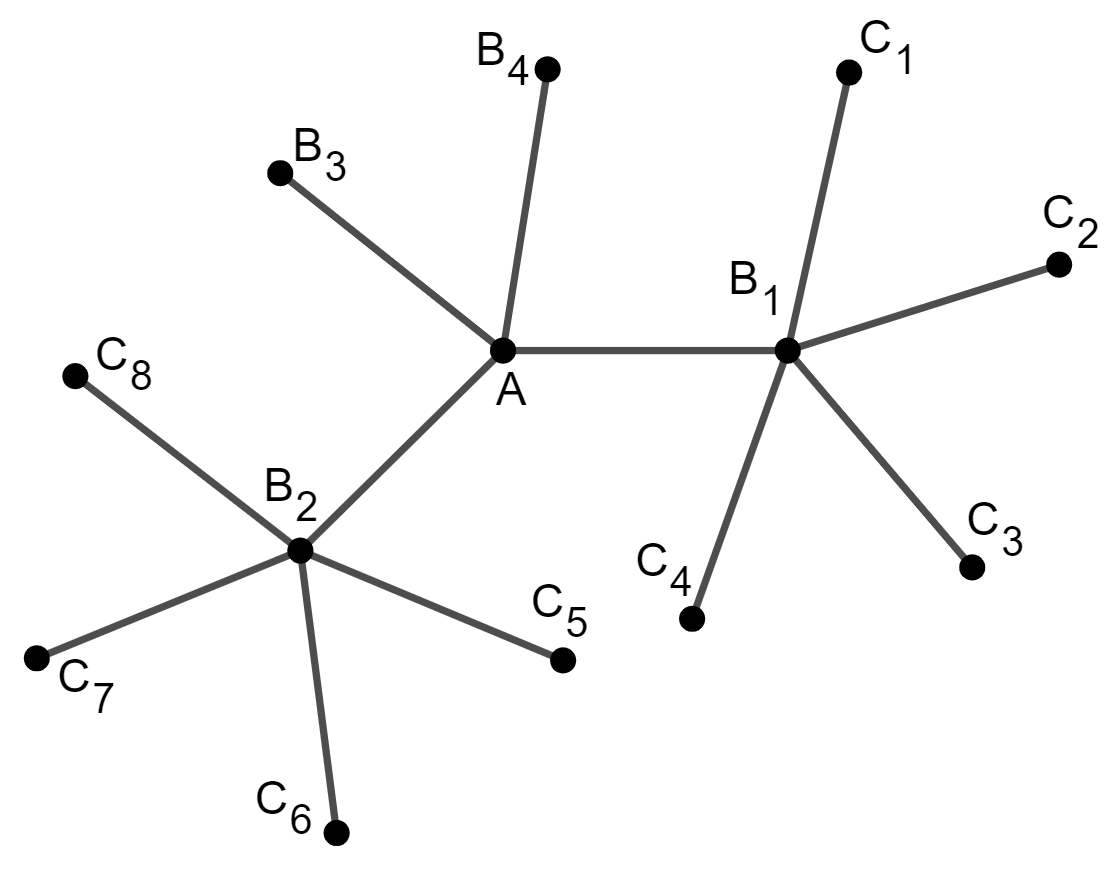}
		\caption{Before rotations}
		\label{F101}
	\end{subfigure}
	\begin{subfigure}[b]{.45\linewidth}
		\centering
		\includegraphics[scale=1.50]{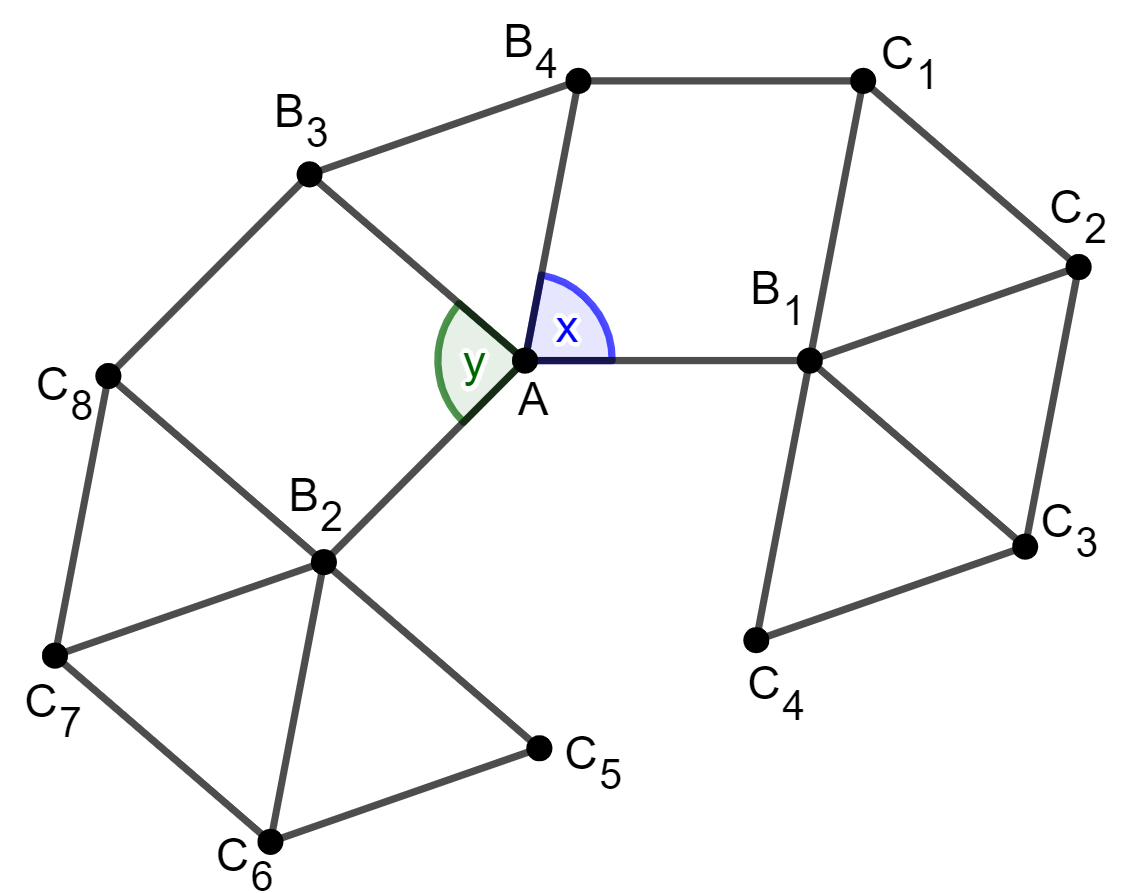}
		\caption{After rotations}
		\label{F102}
	\end{subfigure}
	\captionsetup{justification=centering}
	\caption{Forbidden 13-point configuration.}
	\label{F10}
\end{figure}

\begin{proof}
	Assume the contrary, namely that it is possible to place 13 points in general position on the plane such that all the edges in \Cref{F101} are of unit length, while all the other pairwise distances are not shorter. Let us rotate some parts of this configuration to reduce the number of `degrees of freedom' and simplify the further analysis.
	
	First, we rotate $C_1$ around $B_1$ towards $B_4$ until a new edge appears. If this edge is $C_1B_4$, then $AB_1C_1B_4$ is a rhombus, and we proceed to the next step. If this edge is $AC_1$, then we rotate $B_4$ around $A$ towards $C_1$ until $AB_1C_1B_4$ is a rhombus.
	
	Second, for each $i=2,3,4,$ we consecutively rotate $C_i$ around $B_1$ until the edge $C_iC_{i-1}$ appears. Note that the path $C_5B_2AB_1C_4$ is \textit{convex}\footnote{That it, this five vertices are in convex position in this particular order.} by \Cref{l_convex}. Therefore, during the whole rotation procedure, $C_3$ and $C_5$ lie on the opposite sides of the line $B_1C_4$, and thus the distance $|C_4C_5|$ increases.
	
	Third, we repeat the previous two steps for the other half of our construction, that is to the points $B_2, B_3, C_8, C_7, C_6, C_5$ instead of $B_1, B_4, C_1, C_2, C_3, C_4$, respectively.
	
	Finally, we rotate 6 points $B_2, B_3, C_8, C_7, C_6, C_5$ around $A$ towards $B_4$ with the same angular speed until $B_3B_4$ is also an edge. As earlier, the path $C_5B_2AB_1C_4$ is convex, and thus during the whole rotation procedure, $B_2$ and $C_4$ lie on the opposite sides of the line $AC_5$, so the distance $|C_4C_5|$ increases.
	
	This series of rotations transforms our initial configuration in \Cref{F101} to a much more structured one depicted in \Cref{F102} increasing the distance $|C_4C_5|$ on the way\footnote{An anonymous referee noted that the increase of $|C_4C_5|$ is also immediate from Cauchy’s Arm Lemma, see~\cite[page~228]{Crom}.}. If we denote $\angle B_4AB_1$ and $\angle B_2AB_3$ by $x$ and $y$, respectively, then via a straightforward yet tedious calculation, one can verify that
	\begin{equation*}
		|C_4C_5|^2 = 3-2\sin\Big(x+\frac{\pi}{6}\Big)-2\sin\Big(y+\frac{\pi}{6}\Big)-2\cos\Big(x+y+\frac{\pi}{3}\Big).
	\end{equation*}
	Note that $\pi/3 \le x,y \le 2\pi/3$, since in each of the two corresponding rhombuses, the shorter diagonal cannot be sorter than the side. If either $x=\pi/3$ or $y=\pi/3$, then it is easy to see that $|C_4C_5|^2=1$. Otherwise, if at least one of the two variables, say $x$, is strictly larger than $\pi/3$, then
	\begin{equation*}
		\frac{\partial}{\partial y} |C_4C_5|^2 = 2\sin\Big(x+y+\frac{\pi}{3}\Big)-2\cos\Big(y+\frac{\pi}{6}\Big) = -4\sin\Big(\frac{x}{2}-\frac{\pi}{6}\Big)\sin\Big(y+\frac{x}{2}\Big),
	\end{equation*}
	which is strictly negative on the interval $\pi/3<y<2\pi/3$.
	
	As a result, we conclude that before the rotation procedure, the distance $|C_4C_5|$ could be no less than $1$ if and only if our initial configuration coincides with the one in \Cref{F102} and at least one of the two angles $x$ and $y$ equals $\pi/3$. However, this configuration is definitely not in general position, since, e.g., three points $C_1$, $B_1$, and $C_4$ are collinear, a contradiction.
\end{proof}

\begin{Proposition} \label{p_typei}
	Two cyclically consecutive edges of $A$ cannot both be of Type~I.
\end{Proposition}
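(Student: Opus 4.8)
The plan is to show that two cyclically consecutive Type~I edges at $A$ force the thirteen-point configuration of \Cref{F101} to occur as a subgraph, contradicting \Cref{l_no_kifli}. So suppose, towards a contradiction, that $AB_1$ and $AB_2$ are consecutive edges of $A$, both of Type~I. As $\deg(A)=4$, let $B_3,B_4$ be the other two neighbours of $A$; since $B_1$ and $B_2$ are consecutive, after relabelling we may assume the clockwise cyclic order of the neighbours of $A$ is $B_1,B_4,B_3,B_2$, and we let $\Phi$ be the face incident to $A$ lying between the edges $AB_1$ and $AB_2$. Because $AB_1$ is of Type~I, $\deg(B_1)=5$, the vertex $A$ is the unique \emph{lonely} neighbour of $B_1$, and both faces of $B_1$ incident to $A$ are non-triangular; in particular $B_1B_2$ is not an edge, since otherwise $B_2$, being a neighbour of $A$, would be a neighbour of $B_1$ adjacent to $A$. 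Listing the neighbours of $B_1$ clockwise as $A,C_1,C_2,C_3,C_4$, the Type~I condition forces the triangular faces $B_1C_1C_2$ and $B_1C_3C_4$, and we fix the labelling so that $C_4$ is the neighbour of $B_1$ on $\partial\Phi$. Doing the same around $B_2$ gives its neighbours $A,C_5,C_6,C_7,C_8$, the triangular faces $B_2C_5C_6$ and $B_2C_7C_8$, and $C_5$ the neighbour of $B_2$ on $\partial\Phi$. Then the thirteen vertices $A,B_1,B_2,B_3,B_4,C_1,\dots,C_8$, together with the edges $AB_i$ $(1\le i\le 4)$, $B_1C_j$ $(1\le j\le 4)$, $B_2C_j$ $(5\le j\le 8)$, $C_1C_2$, $C_3C_4$, $C_5C_6$, $C_7C_8$, form precisely the configuration of \Cref{F101}, and it only remains to verify that these thirteen vertices are pairwise distinct.

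The distinctness check is the crux. The $B_i$ are four distinct neighbours of $A$; no $C_i$ equals $A$, as $A$ is lonely around both $B_1$ and $B_2$; and no $C_i$ equals $B_3$ or $B_4$, because $C_i\sim B_1$ (say) would make two non-consecutive neighbours of $A$ adjacent, and then \Cref{l_angles} would make the triangle they span equilateral with apex angle $\pi/3$ at $A$, forcing a triangular face between those two neighbours of $A$ — contradicting that this face is non-triangular. The only remaining possibility is that a neighbour of $B_1$ coincides with a neighbour of $B_2$; since the unit circles about $B_1$ and $B_2$ meet in at most two points, one being $A$, at most one such coincidence occurs, and tracing the cyclic orders it can only be $C_4=C_5$, so that $|\Phi|=4$ and $\Phi$ is the rhombus $AB_1C_4B_2$. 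I would handle this degenerate sub-case separately, by applying \Cref{l_mobius} to the triangle $B_1C_3C_4$ (whose vertices all have degree $5$, since $B_1$ and $B_2$ are popular with $A$ their unique neighbour of degree $<5$) and exploiting the rigidity it gives: whichever branch of \Cref{l_mobius} holds — the extra common neighbour of two of $B_1,C_3,C_4$, or the Möbius strip whose outer cycle then contains $A$, $B_2$ and a neighbour of $C_4$ — its forced geometry is incompatible with the rhombus $AB_1C_4B_2$, producing a collinear triple and contradicting general position.

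The main obstacle is exactly this bookkeeping: aligning the cyclic orders around $A$, $B_1$ and $B_2$ with \Cref{F101} — in particular correctly singling out $C_4$ and $C_5$ as the neighbours of $B_1$ and $B_2$ that point into $\Phi$ — and disposing of the degenerate case $C_4=C_5$. Apart from that, every edge of \Cref{F101} is immediate from the definition of Type~I, and once the configuration is in place the contradiction is handed to us by \Cref{l_no_kifli}.
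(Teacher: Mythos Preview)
Your overall plan coincides with the paper's: split on whether $B_1$ and $B_2$ share a common neighbour besides $A$; if not, exhibit the thirteen-point configuration and invoke \Cref{l_no_kifli}; if so, appeal to \Cref{l_mobius}. The non-degenerate branch is fine modulo bookkeeping (your ``non-consecutive'' argument for $C_i\neq B_3,B_4$ does not cover the consecutive neighbours $B_4$ of $B_1$ and $B_3$ of $B_2$; you should instead use that both faces at $A$ around a Type~I edge are non-triangular, which you already observed, to exclude these as well; and the assertion that a coincidence must be $C_4=C_5$ deserves a one-line planarity justification).

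The real gap is the degenerate case. Applying \Cref{l_mobius} to the single triangle $B_1C_3C_4$ and asserting that ``whichever branch holds\dots produces a collinear triple'' is not a proof. In the common-neighbour branch, knowing only that $C_3$ and $C_4$ share a neighbour $D_1\neq B_1$ gives no contradiction by itself. What the paper actually does is: first rule out the M\"obius strip explicitly (e.g.\ $A$ and $C_1$ would have to be adjacent on the outer $9$-cycle, but $A$ is lonely for $B_1$), and also check that $B_1$ cannot share an extra common neighbour with $C_3$ or with $C_4$, so the surviving option is a common neighbour $D_1$ of $C_3$ and $C_4$. Then \Cref{l_mobius} is applied a \emph{second} time, to the symmetric triangle on the $B_2$ side, yielding a common neighbour $D_2$ of $C_4$ and the next vertex there. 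Now around $C_4$ one sees three consecutive triangular faces (through $C_3,D_1$ and through $D_2$ on the other side), contradicting \Cref{l_deg5}. Your sketch omits the second application of \Cref{l_mobius}, does not pin down which pair gets the common neighbour, and replaces the concrete three-consecutive-triangles contradiction with an unspecified ``collinear triple from the rhombus''. You should spell out this two-sided use of \Cref{l_mobius} and finish via \Cref{l_deg5}.
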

\begin{proof} 
	Assume the contrary, namely that among four neighbors of $A$, which we label $B_1,B_2,B_3,$ and $B_4$ in clockwise order, the first two are popular, and both edges $AB_1$ and $AB_2$ are of Type~I. Note that $B_1$ and $B_2$ have another common neighbor different from $A$, since otherwise our penny graph contains the configuration depicted in \Cref{F101} as a subgraph, which contradicts \Cref{l_no_kifli}.
	
	Let us label the neighbors of $B_1$ and $B_2$ as in \Cref{F11}. Recall that $B_1$ is popular, and so we can apply \Cref{l_mobius} to the triangle $B_1C_3C_4$. Since vertices in each of the four pairs $A$ and $C_1$, $C_2$ and $C_3$, $A$ and $C_4$, $B_1$ and $B_2$ are not adjacent, \Cref{l_mobius} implies that $C_4$ and $C_3$ have a common neighbor different from $B_1$, which we denote by $D_1$. Similarly, $C_4$ and $C_5$ must have a common neighbor $D_2$ different from $B_2$. It remains only to observe that whether $D_1=D_2$ or not, we get a contradiction with \Cref{l_deg5}.
\end{proof}

\begin{figure}[!htb]
	\centering
	\includegraphics[scale=1.40]{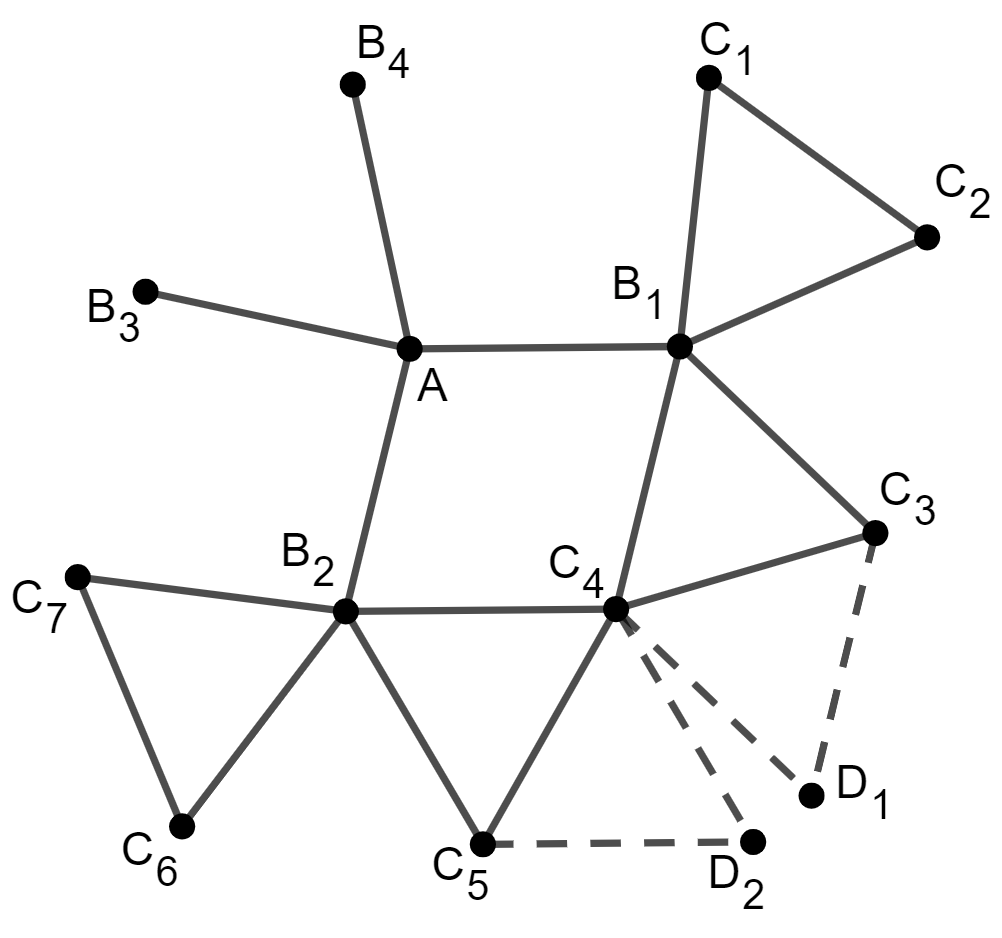}
	\caption{Two consecutive edges of Type~I around a vertex $A$ of degree $4$ is a forbidden configuration.}
	\label{F11}
\end{figure}

Note that the only possibility for $A$ to have four popular neighbors that we have not excluded so far is when all four edges of $A$ are of Type~II. To complete the proof of \Cref{tn2} by excluding this last possibility too, we need the following structural statement. Though it shares many similarities with \Cref{l_no_kifli}, the proof is more technical and computer assisted this time.

\begin{Lemma} \label{l_no_clover}
	A penny graph with vertices in general position cannot contain the configuration depicted in \Cref{F121} as a subgraph.  
\end{Lemma}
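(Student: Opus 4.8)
The plan is to mimic the structure of the proof of \Cref{l_no_kifli}: assume such a $13$-point (or however many) configuration exists, apply a carefully chosen sequence of rotations that preserves being a penny graph while only \emph{decreasing} some critical non-edge distance (or increasing an edge that wants to shrink), until the configuration is rigid enough to be described by a small number of angular parameters; then write the critical squared distance as an explicit trigonometric function of those parameters and show it is always $<1$ (or $\le 1$ with equality only at a degenerate, non-general-position configuration), yielding the contradiction. The ``clover'' name and the reference to \Cref{F121} suggest the configuration has a central vertex of degree $4$ with four Type~II ``petals'' attached, so by the $4$-fold near-symmetry one expects the final reduced configuration to depend on roughly two or four angles, with equality forced precisely when some petal degenerates to an equilateral-triangle fan and three points become collinear.

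First I would set up notation matching \Cref{F121}: name the central degree-$4$ vertex $A$, its four neighbors, and the second-neighborhood vertices completing the four Type~II gadgets, and record which pairs are edges and which are merely constrained to have length $\ge 1$. Then I would perform the rotation reduction in stages, exactly as in \Cref{l_no_kifli}: rotate the ``free'' second-neighbors inward around their respective hubs until extra edges snap into place, using \Cref{l_convex} to guarantee that the relevant paths stay convex so the monitored distance moves monotonically in the right direction; then rotate whole petals around $A$ with equal angular speed to collapse the remaining degrees of freedom. After each rotation I must check that no \emph{other} pairwise distance drops below $1$ — this is where \Cref{l_convex} and \Cref{l_angles} do the bookkeeping — and that the monitored distance genuinely changes monotonically.

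Once the configuration is reduced to depend only on a couple of angles $x,y$ (each constrained to $[\pi/3,2\pi/3]$ by the rhombus short-diagonal inequality, as in \Cref{l_no_kifli}), I would write the critical squared distance $d^2(x,y)$ explicitly. Here is the point where the argument becomes computer assisted: rather than a clean single-variable derivative factorization, one expects $d^2$ to be a genuinely two- (or more-) variable expression whose minimum over the box $[\pi/3,2\pi/3]^2$ must be certified numerically/symbolically — e.g.\ by interval arithmetic, a resultant/Gr\"obner computation, or a verified global optimizer — to be $\ge 1$ with equality only on the boundary where $x$ or $y$ equals $\pi/3$. At those boundary points the relevant petal becomes a chain of equilateral triangles, forcing three points to be collinear, which contradicts general position; hence the monitored non-edge distance is strictly less than $1$ before any rotation, contradicting the penny-graph assumption.

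I expect the main obstacle to be exactly this last step: unlike \Cref{l_no_kifli}, where $\tfrac{\partial}{\partial y}d^2$ factored into a product of sines with an obvious sign, here the objective plausibly does not factor, so one needs a reliable certificate that the global minimum on the closed box is attained only on the degenerate boundary — handling the possibility that interior critical points exist but all have value $>1$, and that boundary segments other than $x=\pi/3$ or $y=\pi/3$ also stay $\ge 1$. A secondary difficulty is making the rotation sequence airtight: with four interacting petals one must be careful that rotating one petal toward closure does not inadvertently violate a distance constraint involving a non-adjacent petal, which may force the rotations to be done in a specific order or with auxiliary convexity lemmas beyond \Cref{l_convex}. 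Managing these case distinctions cleanly, and phrasing the computer verification so it is reproducible, will be the bulk of the work.
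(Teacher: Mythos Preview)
Your plan is essentially the paper's approach: assume the configuration exists, perform a staged sequence of rotations (justified via \Cref{l_convex}) that monotonically increase a monitored distance while collapsing degrees of freedom, then analyze the resulting rigid family trigonometrically and show the monitored constraint can be met only at a non-general-position configuration.

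A few of your expectations are off in ways worth flagging before you execute. The configuration has $19$ points and only a two-fold (not four-fold) near-symmetry, so the ``petals'' come in two halves, not four. The paper does not monitor a single distance throughout: during the early rotations it tracks $|D_1B_4|$, and only in the final two steps does it switch to $|B_3B_4|$; one of these final rotations also requires a short degree-count argument (if the ``wrong'' edge appeared first, some vertex would reach degree $6$) rather than convexity alone. Most importantly, the rotation scheme is aggressive enough to reduce the final family to a \emph{single} parameter $x\in[\pi/3,2\pi/3]$, so the computer-assisted step is merely checking that the one-variable function $\angle B_3AB_4(x)$ stays below $\pi/3$ on the open interval, with equality at \emph{both} endpoints $x=\pi/3$ and $x=2\pi/3$ --- no two-variable optimization, interval arithmetic, or Gr\"obner machinery is needed, and your anticipated ``main obstacle'' largely evaporates.
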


\begin{figure}[!htb]
	\centering
	\begin{subfigure}[b]{.45\linewidth}
		\centering
		\includegraphics[scale=1.40]{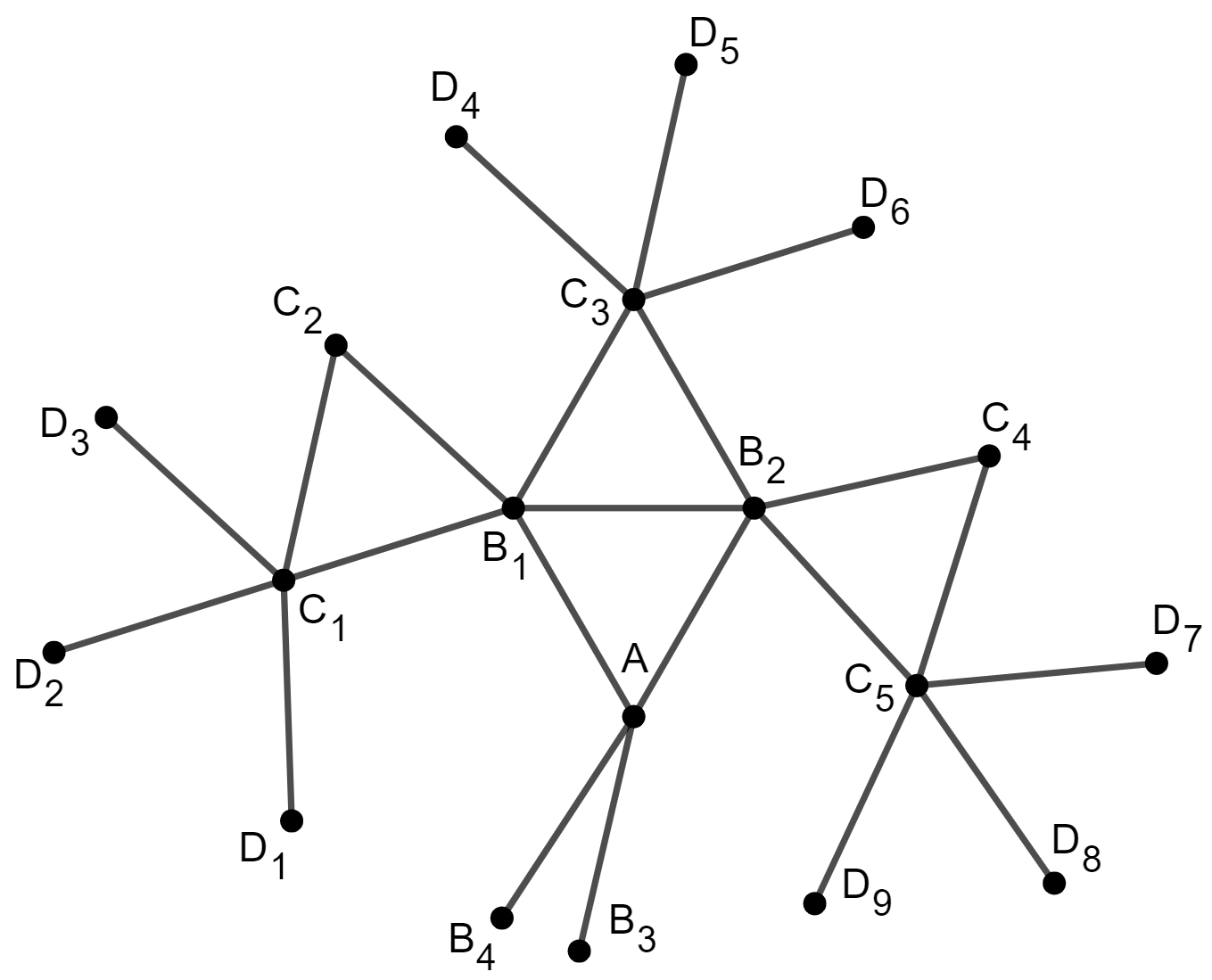}
		\caption{Before rotations}
		\label{F121}
	\end{subfigure}
	\begin{subfigure}[b]{.45\linewidth}
		\centering
		\includegraphics[scale=1.40]{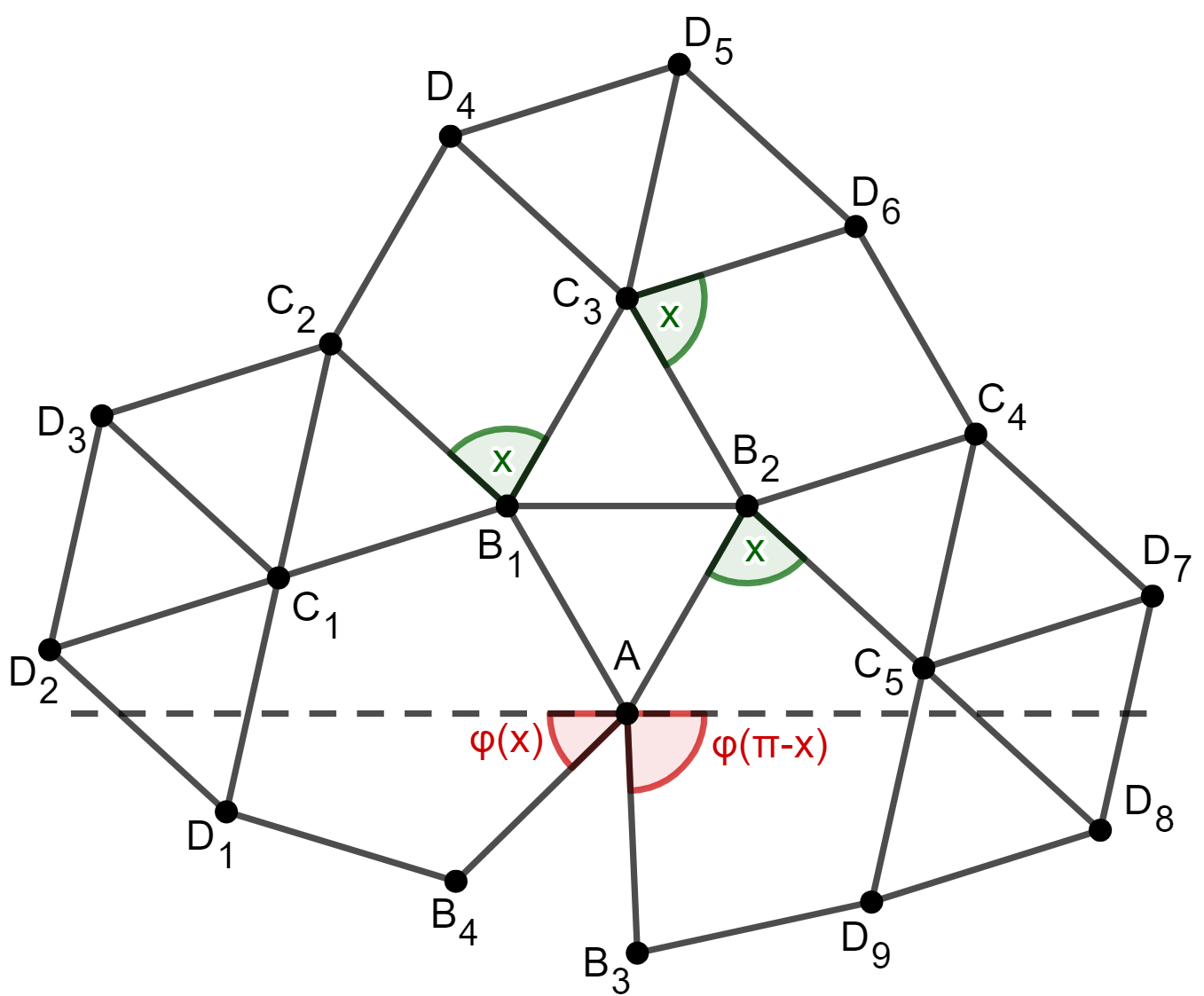}
		\caption{After rotations}
		\label{F122}
	\end{subfigure}
	\captionsetup{justification=centering}
	\caption{Forbidden 19-point configuration.}
	\label{F12}
\end{figure}

\vspace{-3mm}
\begin{proof}
	 As in the proof of \Cref{l_no_kifli}, we assume the contrary, namely that it is possible to place 19 points in general position on the plane such that all the edges in \Cref{F121} are of unit length, while all the other pairwise distances are not shorter. Then we will rotate some parts of this configuration to reduce the number of `degrees of freedom' and simplify the further analysis.
	 
	 First, for each $i=3,2,1$, we consecutively rotate $D_i$ around $C_1$ towards the next neighbor of $C_1$ in clockwise order until an edge between them appears. Note that the path $D_1C_1B_1AB_4$ is convex by \Cref{l_convex}. Therefore, during the whole rotation procedure, $D_2$ and $B_4$ lie on the opposite sides of the line $C_1D_1$, and thus the distance $|D_1B_4|$ increases.
	 
	 Second, we rotate 5 points $C_1, C_2, D_1, D_2, D_3$ around $B_1$ towards $D_4$ with the same angular speed until a new edge appears. If this edge is $C_2D_4$, then $B_1C_2D_4C_3$ is a rhombus, and we proceed to the next step. If this edge is $C_2C_3$, then we rotate $D_4$ around $C_3$ towards $C_2$ until $B_1C_2D_4C_3$ is a rhombus. As earlier, the path $D_1C_1B_1AB_4$ is convex, and thus during the whole rotation procedure, $D_2$ and $B_4$ lie on the opposite sides of the line $B_1D_1$, so the distance $|D_1B_4|$ increases.
	 
	 Third, we repeat the previous two steps for the other half of our construction, that is to the points $B_2, B_3, C_5, C_4, D_9, D_8, D_7, D_6$ instead of $B_1, B_4, C_1, C_2, D_1, D_2, D_3, D_4$, respectively.
	 
	 Fourth, we rotate $D_5$ around $C_3$ towards $D_6$ until the edge $D_5D_6$ appears. Then we rotate $D_4$ around $C_3$ towards $D_5$ and 5 points $C_1, C_2, D_1, D_2, D_3$ around $B_1$ towards $D_4$ with the same angular speed preserving all the edges between them until $D_4D_5$ also becomes an edge. As in the second step, the distance $|D_1B_4|$ increases during the whole rotation procedure.
	 
	 Fifth, we rotate $B_4$ around $A$ towards $B_1$ until a new edge appears. We claim that this edge is $D_1B_4$. Indeed, if the edge $B_1B_4$ appears the first, then the degree of $B_1$ becomes equal to $6$. Therefore, all the angles around $B_1$ are $\pi/3$, and the edge $C_1B_4$ between two cyclically consecutive neighbors of $B_1$ also appears. The same argument applied to $C_1$ yields that if $C_1B_4$ is an edge, then $D_1B_4$ is an edge too. Hence the edge $D_1B_4$ indeed appears the first, possibly simultaneously with some others. Note that this rotation increases the distance $|B_3B_4|$.
	 
	 Finally, we repeat the previous step to the other half of our construction, that is we rotate $B_3$ around $A$ towards $B_2$ until the edge $D_9B_3$ appears increasing the distance $|B_3B_4|$.
	 
	 This series of rotations transforms our initial configuration in \Cref{F121} to a much more structured one depicted in \Cref{F122} increasing the distance $|B_3B_4|$ on the way\footnote{An anonymous referee noted that the increase of $|B_3B_4|$ is also immediate from Cauchy’s Arm Lemma, see~\cite[page~228]{Crom}.}. In addition, we can assume without loss of generality that $A$ is the origin, and the line $AC_3$ is vertical. Note that in our configuration, there is only one `degree of freedom' left. More formally, the position of each point can be uniquely expressed as a function of one variable: the value $x$ of each of the three equal angles $\angle C_5B_2A = \angle D_6C_3B_2 = \angle C_2B_1C_3$ ranging between $\pi/3$ and $2\pi/3$.
	 
	 We claim that the angle $\angle B_3AB_4$ equals $\pi/3$ at the endpoints of this range and strictly smaller than $\pi/3$ at all the intermediate points. This claim implies that before the rotation procedure, the distance $|B_3B_4|$ could be no less than $1$ if and only if our initial configuration coincides with the one in \Cref{F122} and $x$ equals either $\pi/3$ or $2\pi/3$. However, this configuration is definitely not in general position, since, e.g., three points $D_1$, $C_1$, and $C_2$ are collinear, a contradiction.
	 
	 In the rest of this proof, we provide some technical details to convince most readers that our claim is correct, and to aid the others verify it via computer algebra systems. First, one can check that the horizontal and vertical coordinates of $D_1$ are equal to
	 \begin{equation*}
	 	a(x) \coloneqq -\frac{1}{2} -\sqrt{3}\sin\Big(x+\frac{\pi}{3}\Big) \ \mbox{ and } \  b(x) \coloneqq \frac{\sqrt{3}}{2} +\sqrt{3}\cos\Big(x+\frac{\pi}{3}\Big),
	 \end{equation*} 
 	respectively. Next, the angle between $AB_4$ and the negative half of the horizontal axis, see \Cref{F122}, equals
 	\begin{equation*}
 		\varphi(x) \coloneqq \arctan\Big(\frac{b(x)}{a(x)}\Big)+\arccos\Big(\frac{\sqrt{a(x)^2+b(x)^2}}{2}\Big).
 	\end{equation*}
 	Similarly, the angle between the positive half of the horizontal axis and $AB_3$ equals $\varphi(\pi-x)$, and thus
 	\begin{equation*}
 		\angle B_3AB_4 = \pi - \varphi(\pi-x) - \varphi(x).
 	\end{equation*}
 	It is straightforward to check that the latter quantity is equal to $\pi/3$ if $x$ equals either $\pi/3$ or $2\pi/3$. Besides that, the plot of this one-variable function that we drew in $\mathtt{Maple 2019}$, see \Cref{F13}, suggests\footnote{Note that this observation can be made mathematically rigorous in the following routine way that we decided to omit. First, since the derivative of the function tents to $-\infty$ as $x \to \pi/3$, a rough estimate is enough to show that the derivative is negative for $\pi/3 < x < \pi/3+\varepsilon$ for a sufficiently small positive $\varepsilon$. Pick a positive $\delta$ such that $\angle B_3AB_4 = \pi/3-\delta$ for $x=\pi/3+\varepsilon$. Similarly,  $\angle B_3AB_4 = \pi/3-\delta$ for $x=2\pi/3-\varepsilon$, and the derivative is positive for $2\pi/3-\varepsilon < x < 2\pi/3$. Hence, our claim is valid at least on these two short segments. For $\pi/3+\varepsilon \le x \le 2\pi/3-\varepsilon$, our function is smooth, and so the absolute value of its derivative is bounded by some constant $M$ which can be roughly estimated. Having this estimate, it is sufficient to verify that $\angle B_3AB_4 \le \pi/3-\delta$ for roughly $M/\delta$ values of $x$ equally distributed  between $\pi/3+\varepsilon$ and $2\pi/3-\varepsilon$ to conclude that $\angle B_3AB_4 < \pi/3$ for all the intermediate points, as claimed.} that $\angle B_3AB_4 < \pi/3$ for all $\pi/3 < x < 2\pi/3$, as desired.
\end{proof}

\begin{figure}[!htb]
	\centering
	\includegraphics[scale=1.00]{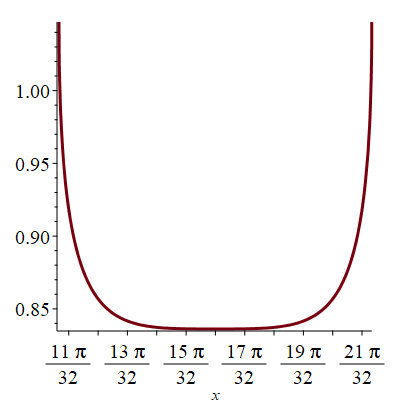}
	\caption{Dependence of $\angle B_3AB_4$ on $x$.}
	\label{F13}
\end{figure}

\begin{Proposition} \label{p_typeii}
	Four edges of $A$ cannot all be of Type~II.
\end{Proposition}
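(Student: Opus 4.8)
The plan is to assume, for contradiction, that all four edges $AB_1,AB_2,AB_3,AB_4$ of $A$ (with the neighbours of $A$ listed clockwise) are of Type~II, and to force the $19$-point configuration forbidden by \Cref{l_no_clover}.

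First I would reconstruct the structure within distance two of $A$. For each $i$, the Type~II picture at $B_i$ attaches a triangular face to $A$ along exactly one of the two sides of the edge $AB_i$, with apex a common neighbour of $A$ and $B_i$; since $\deg(A)=4$ this apex lies in $\{B_{i-1},B_{i+1}\}$, and the other face of $B_i$ at $AB_i$ is not triangular. The only genuinely delicate point is to check, from the clockwise orderings and the fact that the neighbours of a popular vertex are arranged so that $C_1\sim C_2$ and $C_3\sim C_4\sim C_5$, that the four triangles so produced sit consistently around $A$ only if, after relabelling, they are the two \emph{opposite} faces $AB_1B_2$ and $AB_3B_4$. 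Hence $B_1\sim B_2$, $B_3\sim B_4$, the two faces of $A$ between $AB_2,AB_3$ and between $AB_4,AB_1$ are non-triangular (so $B_2\not\sim B_3$, $B_4\not\sim B_1$, and $\angle B_2AB_3,\angle B_4AB_1\in(\pi/3,\pi)$ sum to $4\pi/3$, with strictness from general position). Now I would use popularity: since $A$ has degree $4$, it is the unique low-degree neighbour a popular $B_i$ may have, so \emph{every} other neighbour of \emph{every} $B_i$ has degree $5$. Reading the Type~II structure at $B_1$ and $B_2$ then produces a second common neighbour $E_{12}$ of $B_1,B_2$, of degree $5$, on the far side of $B_1B_2$ from $A$ — so $A,B_1,E_{12},B_2$ span a rhombus with diagonal $B_1B_2$ — and symmetrically $E_{34}$ for $B_3,B_4$; moreover each $B_i$ carries two further mutually adjacent degree-$5$ neighbours $C^{(i)}_1,C^{(i)}_2$ on the boundary of one of the non-triangular faces of $A$. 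Easy degree counts (a sixth neighbour would overload $B_j$) and distance estimates show these vertices are pairwise distinct and that, for instance, $E_{12}$ is adjacent to none of $B_3,B_4,E_{34}$.

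Next I would push one step further out. The triangles $B_1B_2E_{12}$, $B_3B_4E_{34}$ and $B_iC^{(i)}_1C^{(i)}_2$ all have their three vertices of degree $5$; feeding them to \Cref{l_mobius}, \Cref{l_apricot} and \Cref{l_deg5}, together with the unit-length constraint and the fact that no three vertices are collinear, forces the third-neighbourhood vertices (the three further neighbours of $E_{12}$, those of each relevant $C^{(i)}_j$, and so on), pins down which of them coincide, and thereby fills in each non-triangular face of $A$ in the only admissible way. One must check at each stage that no forced four-cycle-with-diagonal is a kernel through a neighbour of $A$ — that would place $A$ in an apricot, contrary to hypothesis (compare \Cref{l_kernel_dis}) — exactly the type of verification carried out in the Type~I argument of \Cref{p_typei}. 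The upshot is a set of $19$ vertices whose unit-distance pairs reproduce the configuration of \Cref{F121}, contradicting \Cref{l_no_clover}.

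The main obstacle is this last step: the third-neighbourhood bookkeeping that makes the two non-triangular faces of $A$ close up \emph{exactly} into the clover of \Cref{F121}, tracking every vertex coincidence and ruling out stray kernels. As in \Cref{p_typei}, I would expect that in any branch where a forced coincidence fails, that branch instead terminates through a short direct appeal to \Cref{l_deg5} or \Cref{l_kernel_dis} rather than to \Cref{l_no_clover}.
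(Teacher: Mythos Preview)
Your outline has the right endgame in one branch but misses the main case split. You assert that ``easy degree counts and distance estimates show these vertices are pairwise distinct,'' meaning in particular that the two extra neighbours $C^{(2)}_j$ of $B_2$ are disjoint from the two extra neighbours $C^{(3)}_k$ of $B_3$. This is exactly the point that can fail: nothing rules out that $B_2$ and $B_3$ share a common neighbour other than $A$ (symmetrically $B_1,B_4$). The paper's proof pivots on precisely this dichotomy. If neither cross-pair shares a neighbour, the Type~II data at $B_1$ and $B_2$ (with the degree-$5$ hypothesis on their non-$A$ neighbours) already forces the $19$-point configuration of \Cref{F121}, and \Cref{l_no_clover} finishes; no symmetric build-out around $B_3,B_4$ is needed, and in fact the figure carries third-neighbourhood structure only on the $B_1B_2$ side, with $B_3,B_4$ appearing as bare vertices joined by an edge.

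The substantive work you have not anticipated is the other branch: when, say, $B_2$ and $B_3$ do share a common neighbour $C_3$, the argument does not proceed toward \Cref{l_no_clover} at all. One applies \Cref{l_mobius} to the degree-$5$ triangle $B_2C_2C_3$ (and symmetrically to $B_3C_3C_4$). If a M\"obius strip appears, a parallelism computation exactly like that in \Cref{l_no_triangle} produces three collinear vertices $D_2,C_3,C_5$, contradicting general position; otherwise the forced common neighbours $D_1$ of $C_2,C_3$ and $D_2$ of $C_3,C_4$ create three consecutive triangles at $C_3$, contradicting \Cref{l_deg5}. So this branch terminates via \Cref{l_mobius} plus a collinearity argument, not merely via ``a short direct appeal to \Cref{l_deg5} or \Cref{l_kernel_dis}'' as you expected; \Cref{l_kernel_dis} plays no role here. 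You should make the common-neighbour case explicit and supply this argument.
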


\begin{proof}
	Our argument is very close to the proof of \Cref{p_typei}. As earlier, we assume the contrary, namely that all $4$ neighbors of $A$ are popular, and all the edges of $A$ are of Type~II. Let us label these neighbors by $B_1, B_2, B_3,$ and $B_4$ in clockwise order such that $B_1B_2$ and $B_3B_4$ are both edges. If neither $B_1$ and $B_4$, nor $B_2$ and $B_3$ have a common neighbor different from $A$, then our penny graph contains a configuration depicted in \Cref{F121} as a subgraph, which contradicts \Cref{l_no_clover}. So we can assume without loss of generality that $B_2$ and $B_3$ share a common neighbor.
	
	Let us label five remaining neighbors of $B_2$ and $B_3$, including a common one, by $C_1, C_2, C_3, C_4, C_5$ in clockwise order, see \Cref{F141}. Since $B_2$ is popular, we can apply \Cref{l_mobius} to the triangle $B_2C_2C_3$, which yields that either there is a \MS{} around it, or $C_2$ and $C_3$ share a common neighbor different from $B_2$.
	
	In the former case, we label four remaining vertices of the \MS{} by $D_1, D_2, D_3, D_4$ in clockwise order, see \Cref{F142}. Observe that $\angle D_2C_2C_3 = \angle B_3C_3C_2$. Hence, the segment $D_2C_2$ is parallel to $C_3B_3$. Similarly, $C_2C_3$ is parallel to $B_3C_5$ because $\angle B_3C_3C_2 = \angle C_3B_3C_5$. Since these four segments are also of the same length, we conclude that the vertices $D_2$, $C_3$, and $C_5$ are collinear, a contradiction.
	
	Therefore, \Cref{l_mobius} implies that $C_2$ and $C_3$ have a common neighbor different from $B_2$, which we denote by $D_1$, see \Cref{F143}. Similarly, $C_3$ and $C_4$ must have a common neighbor $D_2$ different from $B_3$. It remains only to observe that whether $D_1=D_2$ or not, we get a contradiction with \Cref{l_deg5}.
\end{proof}

\begin{figure}[!htb]
	\centering
	\begin{subfigure}[b]{.32\linewidth}
		\centering
		\includegraphics[scale=1.30]{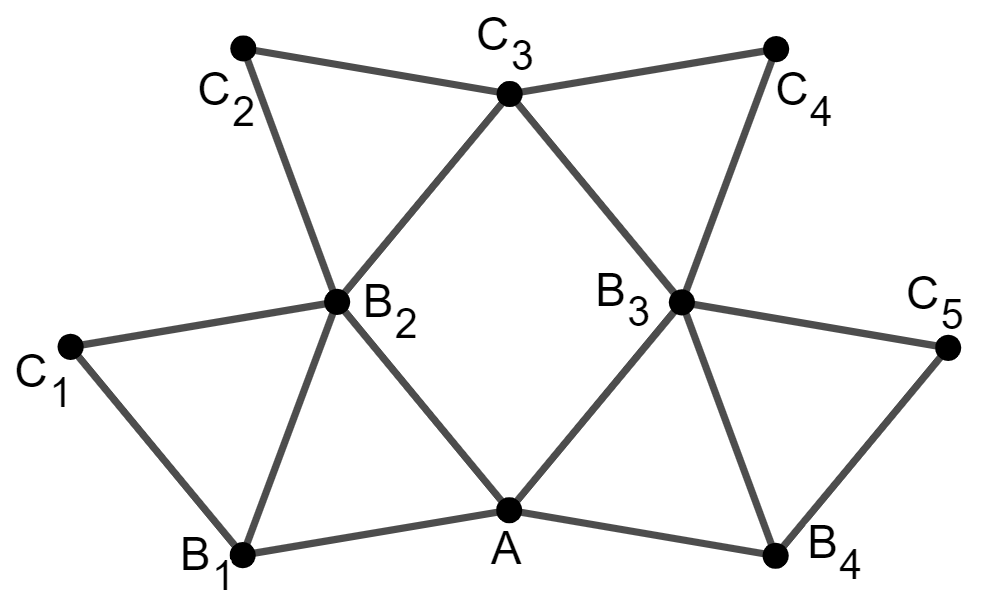}
		\captionsetup{justification=centering}
		\caption{Apply \Cref{l_mobius} to $B_2C_2C_3$ \newline and $B_3C_3C_4$}
		\label{F141}
	\end{subfigure}
	\begin{subfigure}[b]{.32\linewidth}
		\centering
		\includegraphics[scale=1.30]{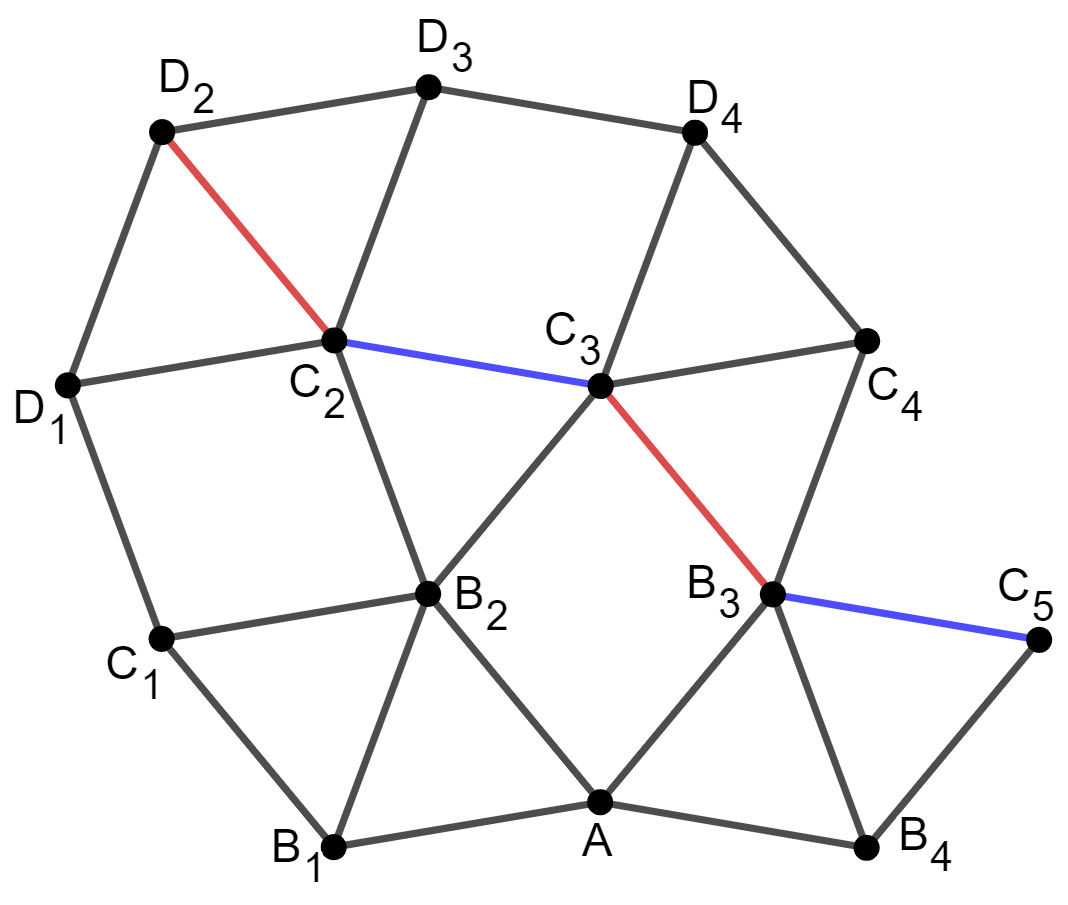}
		\captionsetup{justification=centering}
		\caption{\MS{} leads to \newline a contradiction}
		\label{F142}
	\end{subfigure}
	\begin{subfigure}[b]{.32\linewidth}
		\centering
		\includegraphics[scale=1.30]{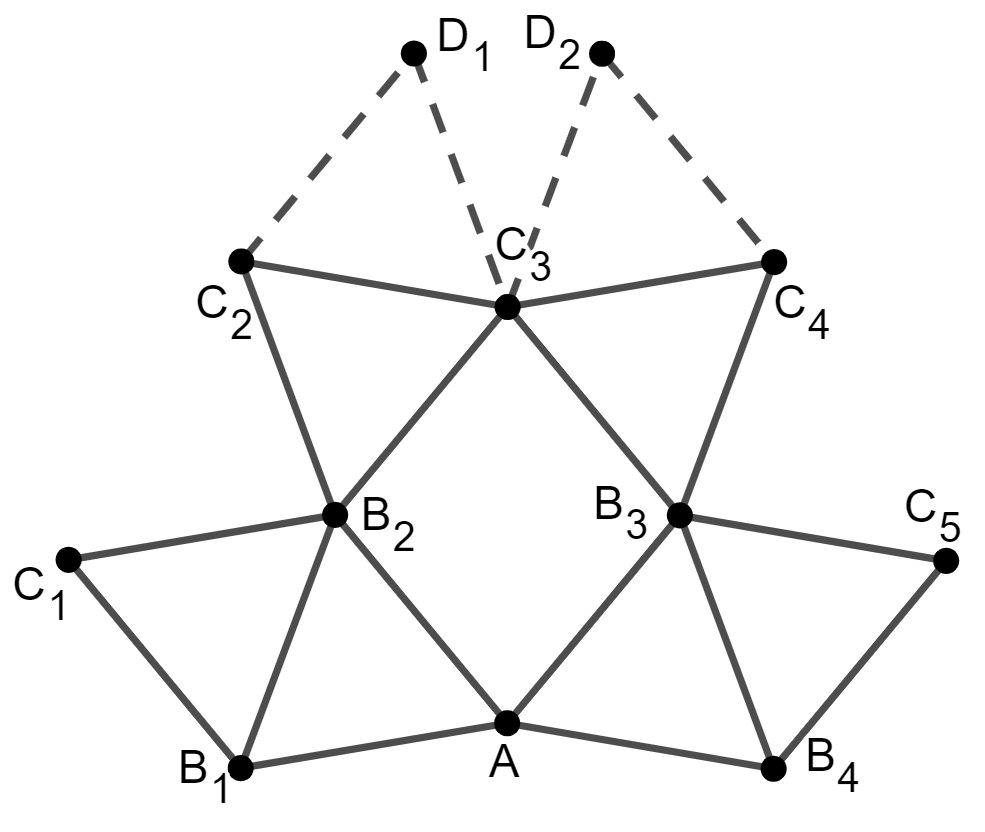}
		\captionsetup{justification=centering}
		\caption{Common neighbors yield \newline a contradiction too}
		\label{F143}
	\end{subfigure}
	\captionsetup{justification=centering}
	\caption{Two pairs of edges of Type~II joined by a rhombus is a forbidden configuration.}
	\label{F14}
\end{figure}

\section{Concluding remarks} \label{sec:con}

\noindent
\textbf{Limitations of the method}. Note that in the discharging procedure described at the beginning of \Cref{s3}, there might be no extra charge left in the second neighborhood of a vertex provided that it has only one unpopular neighbor, which can be the case. A careful analysis of the third neighborhood will probably lead to an improvement of our upper bound. However, rather different ideas may be required to finally get a tight result, which we suspect is much closer to or even coincides with the lower bound from \Cref{tT}.

\vspace{1.5mm}

\noindent
\textbf{Planar generalizations}. One can study a variation of this problem, where the strong condition that the vertices are in general position is replaced by a weaker one that no $m$ of them are collinear, $m \ge 3$. If $c_m$ stands for the maximum density of a penny graph satisfying this condition, then it is not hard to show that $c_m<3$ for all $m \in \N$, and that $c_m \to 3$ as $m$ grows. However, the exact magnitude of the latter convergence may be not that easy to grasp. Another possible direction here is to replace penny graphs with a different class of geometric graphs whose edges are all of the same length, e.g. with $k$-planar unit distance graphs, see~\cite{GT}. Since most of the extremal constructions of these graphs are based on lattices, it is natural to expect that the additional constraint that not too many vertices are collinear will make the problem non-trivial.

\vspace{1.5mm}

\noindent
\textbf{Higher dimensions}. It is easy to see that among $n$ points in $\R^d$, the shortest distance occurs at most $k_dn/2$ times, where $k_d$ is the $d$-dimensional kissing number, see~\cite{KP} and \cite[Section~4]{Swan}. This upper bound is known to be tight, up to the $o(n)$-term, only for a few small values of $d$. In these dimensions, it would be interesting to study how an additional assumption that the points are in general position affects the leading term.

\vspace{5mm}

\noindent
{\bf \large Acknowledgements.} Research was supported by ERC Advanced Grant `GeoScape' No. 882971. The author would like to thank an anonymous referee for their helpful suggestions and G\'eza T\'oth, who introduced us to the problem, for his valuable comments and constant attention to our work. 

{\small }

\end{document}